\newtheorem{theorem}{Theorem}[section]
\newtheorem{lemma}[theorem]{Lemma}
\theoremstyle{definition}
\newtheorem{remark}{{\it Remark}}[section]
\numberwithin{equation}{section}
\newcommand{\Vc}{{\mathbf{c}}}
\newcommand{\Vl}{{\mathbf{l}}}
\newcommand{\Vr}{{\mathbf{r}}}
\newcommand{\Vu}{{\mathbf{u}}}
\newcommand{\Ba}{{\boldsymbol{a}}}
\newcommand{\Bb}{{\boldsymbol{b}}}
\newcommand{\Bf}{{\boldsymbol{f}}}
\newcommand{\Bj}{{\boldsymbol{j}}}
\newcommand{\Bn}{{\boldsymbol{n}}}
\newcommand{\Bu}{{\boldsymbol{u}}}
\newcommand{\Bv}{{\boldsymbol{v}}}
\newcommand{\Bw}{{\boldsymbol{w}}}
\newcommand{\Bx}{{\boldsymbol{x}}}
\newcommand{\BA}{{\boldsymbol{A}}}
\newcommand{\BB}{{\boldsymbol{B}}}
\newcommand{\BC}{{\boldsymbol{C}}}
\newcommand{\BD}{{\boldsymbol{D}}}
\newcommand{\BE}{{\boldsymbol{E}}}
\newcommand{\BF}{{\boldsymbol{F}}}
\newcommand{\BH}{{\boldsymbol{H}}}
\newcommand{\BL}{{\boldsymbol{L}}}
\newcommand{\BP}{{\boldsymbol{P}}}
\newcommand{\BV}{{\boldsymbol{V}}}
\newcommand{\BZ}{{\boldsymbol{Z}}}
\newcommand{\Bdelta}{{\boldsymbol{\delta}}}
\newcommand{\BPi}{\boldsymbol{\Pi}}
\newcommand{\curl}{\Vc\Vu\Vr\Vl\,}
\definecolor{red}{rgb}{1,0,0}
\definecolor{blue}{rgb}{0,0,1}
\begin{document}

\title{Analysis of a semi-implicit structure-preserving finite element method for 
the nonstationary incompressible Magnetohydrodynamics equations}

\thanks{The work of Weifeng Qiu is partially 
supported by a grant from the Research Grants Council of the Hong Kong Special Administrative Region, China 
(Project No. CityU 11302718).  The work of Ke Shi is partially supported by Simons Foundation Collaboration Grants for Mathematicians (Award Number: 637267). As a convention the names of the authors are alphabetically ordered. All authors contributed equally in this article. }

\author{Weifeng Qiu}
\address{Department of Mathematics, City University of Hong Kong, 83 Tat Chee Avenue, Hong Kong, China.}
\email{weifeqiu@cityu.edu.hk}
\author{Ke Shi}
\address{Department of Mathematics and Statistics, Old Dominion University, Norfolk, VA 23529, USA.}
\email{kshi@odu.edu}
\subjclass[2000]{Primary: 65N30, 76W05}
\keywords{magnetohydrodynamics, finite element method, structure-preserving}


\begin{abstract}
We revise the structure-preserving finite element method in  [K. Hu, Y. MA and J. Xu. (2017) Stable finite
 element methods preserving $\nabla \cdot \mathbf{B}=0$ exactly for MHD models. Numer. Math., 
 135,  371-396]. The revised method is semi-implicit in time-discretization. We prove the linearized scheme preserves the divergence free property for the magnetic field exactly at each time step. Further, we showed the linearized scheme is unconditionally stable and we obtain optimal convergence in the energy norm of the revised method even for solutions with low regularity.
\end{abstract}
   
\maketitle

\section{Introduction}   
In this paper, we consider the nonstationary incompressible magnetohydrodynamics (MHD) equations over $[0, T] \times \Omega$ where $\Omega\subset \mathbb{R}^{d}$ ($d=2,3$) is a Lipschitz polyhedral domain:
\begin{subequations}\label{MHD_orig}
\begin{align}
\frac{\partial \Bu}{\partial t} + ( \Bu \cdot \nabla) \Bu
- R_{e}^{-1} \Delta \Bu
- S \Bj \times \BB
+ \nabla p
&= \Bf , \\
 \Bj
- R_{m}^{-1} \nabla \times \BB
&= \boldsymbol{0} , \\
 \frac{\partial \BB}{\partial t} + \nabla \times \BE &= \boldsymbol{0} , \\
 \nabla \cdot \BB &= 0, \\
 \nabla \cdot \Bu &= 0, \\
 \Bj &= \BE + \Bu \times \BB, \\
 \intertext{with the boundary and initial conditions as}
 \Bu = 0, \quad \BB \cdot \Bn = 0, \quad \BE \times \Bn = \boldsymbol{0},\quad &\text{on $\partial \Omega$}, \\
 \Bu(\Bx, 0) = \Bu_0(\Bx), \quad \BB(\Bx, 0) = \BB_0(\Bx),
\label{eq:MHD}
\end{align}
\end{subequations}
where $\nabla\cdot \Bu_{0} = \nabla\cdot \BB_{0} = 0$. In (\ref{MHD_orig}), $\Bu$ is the fluid velocity, 
$p$ is the fluid pressure, $\Bj$ is the current density, $\BE$ and $\BB$ are the electric and magnetic fields 
respectively. The system is characterized by three parameters: the hydrodynamic Reynolds number $R_e$, 
the magnetic Reynolds number $R_m$ and the coupling number $S$. $\Bf\in \BL^{2}(\Omega)$ stands for 
the external body force. $\Bn$ denotes the outer unit normal vector on $\partial \Omega$.

The nonstationary incompressible MHD equations have wide applications in fusion reactor blankets 
\cite{Abdou2001}, liquid metals \cite{Gerbeau-LL, Moreau} and plasma physics \cite{Goedbloed-Poedts}. 
The global existence of weak solution is well known. The existence and uniqueness of local strong solutions 
on regular domains is proved in \cite{Sermane-Temam}. There are many research works on numerical methods 
and numerical analysis on the nonstationary incompressible MHD equations. Here we just provide an 
incomplete list \cite{Badia-Codina-Planas, BanasProhl2010, Codina, GaoQiu2019, He2015, Marioni, Prohl}.

Recently, exactly divergence-free discretizations on the magnetic field $\BB$ draws more attentions.  
Though by \cite{GaoQiu2019} it seems that it is tolerable if this property is only satisfied weakly 
in numerical simulations of incompressible MHD equations, we notice that it is desirable to provide 
exactly divergence-free numerical magnetic filed in 
numerical approximations for inductionless MHD model (see \cite{Ni2007a, Ni2007b, Ni2013, Ni2014}).   
Authors of \cite{Hiptmair_LMZ} utilized $H(\text{curl})$-conforming elements to approximate $\BA$ 
which is the potential of $\BB$ ($\BB = \nabla \times \BA$), such that their numerical approximation 
of $\BB$ is exactly divergence-free. It is proved in \cite{Hiptmair_LMZ} that a subsequence of their 
numerical solutions converge to the true solution on any Lipschitz polyhedral domain. 
In \cite{HuMaXu}, a structure-preserving finite element method is developed for the nonstationary 
incompressible MHD equations. Besides $\Bu$ and $\BB$, the electric field $\BE$ is also considered 
as an unknown in the numerical method in \cite{HuMaXu}. By using discretization of the equation 
\[
\dfrac{\partial \BB}{\partial t} + \nabla \times \BE = 0,
\]
the numerical approximation of $\BB$ is exactly divergence-free. Later in \cite{MaXuZhang2016}, 
it is proved that the method in \cite{HuMaXu} achieves optimal convergence in the energy norm under the regularity assumption 
that $\Bj \in L^{\infty}([0, T]; \BL^{\infty}(\Omega))$. 

In this paper, our main contribution is to carefully modify/linearize the structure-preserving finite element method in \cite{HuMaXu} so that it is
semi-implicit with respect to time-discretization and it only need to solve a linear system at each time step. This effort is based on our rigirous analysis of the scheme. In addition, we don't compromise on the accuracy of the method, structure-preserving and/or smoothness of the exact solutions.
We prove optimal convergence for the energy norm even for 
solutions with low regularity. We also show that our numerical approximation of $\BB$ is exactly 
divergence-free and the method is energy conserving. 
 
 The rest of the paper is organized as follows: Section 2 we discribe the linearized scheme together with the main results from our analysis. In Section 3 we present analytic tools needed for the analysis. Details of the proofs for the main result is presented in Section 4.
\section{An implicit linearized mixed FEM}
\subsection{Preliminaries}
In this section, we introduce the notations and spaces that related with the scheme.  We adopt the standard notation for the inner product and the norm of the
{$L^{2}$} space. Namely, for scalar valued functions the inner products are defined as:
$
(u,v):=\int_{\Omega}u\cdot v \mathrm{d}x,\quad
\|u\|:=\left(\int_{\Omega} \lvert u\rvert^2 \mathrm{d}x\right)^{1/2}. 
$
This convention applies to vector and tensor-valued functions as well.
For a function $u \in W^{k,p}(\Omega)$, we use $\|u\|_{k,p}$ for the standard norm in $W^{k,p}(\Omega)$. When $p = 2$ we drop the index $p$, i.e. $\|u\|_k := \|u\|_{k,2}$ and $\|u\|:= \|u\|_{0,2}$. Vector-valued Sobolev spaces, we use the bold version of the corresponding scalar-valued spaces. For instance, $\BH^1(\Omega) := [H^1(\Omega)]^d$. 

In addition to the standard Sobolev spaces over $\Omega$, we define vector function spaces as:
\begin{align*}
\BH(\curl,\Omega)&:=\{\Bv\in L^2(\Omega), \nabla\times \Bv \in [L^2(\Omega)]^3\},  \\
\BH(\text{div},\Omega) &:=\{\Bw \in L^2(\Omega), \nabla\cdot \Bw \in L^2(\Omega)\}, \\
\BH_{0}^1(\Omega) &:=\left\{\Bv\in \BH^{1}(\Omega): \Bv\left |_{\partial \Omega} =0\right .\right \}, \\
\BH_0(\curl,\Omega) &:=\{\Bv\in H(\curl, \Omega), \Bv \times \Bn =0 \mbox{ on } \partial\Omega\}, \\
\BH_0(\text{div},\Omega)&:=\{\Bw\in H(\text{div}, \Omega), \Bw\cdot \Bn=0 \mbox{ on } \partial\Omega\}, \\
\BH(\text{div}0,\Omega)&:=\{\Bw\in H(\text{div}, \Omega), \nabla \cdot \Bw=0 \}, \\
\BH_0(\text{div}0,\Omega)&:=\{\Bw\in H_0(\text{div}, \Omega), \nabla \cdot \Bw=0\}, \\
L^2_0(\Omega) & := \{ q \in L^2(\Omega), \int_{\Omega} q dx  = 0 \}.
\end{align*}



\subsection{The linearized mixed FEM}
Next we introduce some notation and spaces in order to define the linearized mixed FEM for the problem \eqref{MHD_orig}. Let $\mathcal{T}_h$ be a conforming triangulation of the domain $\Omega$ with tetrahedral elements. Here we assume that the triangulation is shape-regular and quasi-uniform. For each element $K \in \mathcal{T}_h$, $h_K$ denotes the diameter of $K$ and the global mesh size is denoted by $h = \max_{K \in \mathcal{T}_h} h_K$. To approximate $(\Bu, p)$, we use the stable pair of Stokes elements $\BV_h \times Q_h \subset \BH^1_0 \times L^2_0(\Omega)$ which satisfies the discerete {\em inf-sup} condition: there exists a constant $\beta > 0$ only depending on $\Omega$ such that
\begin{equation}\label{infsup}
\inf_{q_h \in Q_h \backslash 0} \sup_{\Bv_h \in \BV_h \backslash \boldsymbol{0}} \frac{(q_h, \nabla \cdot \Bv_h)_{\Omega}}{\|\Bv_h\|_{1} \|q_h\|_0} \ge \kappa.
\end{equation}
In this paper, we choose the classical $P^{k+1}$-$P^k$ Taylor-Hood pair:
\begin{align*}
\BV_h &:= \{ v_h \in \BH^1_0(\Omega)| \Bv_h|_{K} \in \BP^{k+1}(K), \forall K \in \mathcal{T}_h\}, \\
Q_h &:= \{q_h \in L^2_0(\Omega) \cap C(\Omega) | q_h|_K \in P^k(K), \forall K \in \mathcal{T}_h\}.
\end{align*}
Here $P^l(K)$ denotes the space of polynomials of degree no more than $l$ over $K$. 

For the other two unknowns $(\BE, \BB)$, we use discrete spaces $\BC_h \times \BD_h \subset \BH_0(\curl, \Omega) \times \BH_0(\text{div}, \Omega)$ which are competible in the sense that they belong to the same finite element de Rham sequence \cite{ArnoldFalkWinther06,ArnoldFalkWinther10}. In this paper, we choose $\BC_h$ to be the $k$-th order second type N\'ed\'elec $\BH(\text{curl})$ element and $\BD_h$ the $k$-th order Brezzi-Douglas-Marini element on simplexes. In this paper we assume $k \geq 1$.

For the time discretization, let $\{t_h\}_{n = 0}^N$ be a uniform partion of time domain $(0, T)$ with the step size $\tau = \frac{T}{N}$, and for generic function $U(\Bx, t)$ we define $U^n = U(\cdot, n \tau)$. Finally, we define
\[
D_{\tau} U^n = \frac{U^{n} - U^{n-1}}{\tau}, \quad \overline{U}^n = \frac{U^n + U^{n-1}}{2}, \quad \text{for} \quad n = 1,2, \cdots, N.
\]

Now we are ready to derive the linearized mixed FEM for the MHD system \eqref{MHD_orig}. For each $n > 0$ we seek approximate solution $(\Bu^n_h, p^n_h, \BE^n_h, \BB^n_h) \in \BV_h \times Q_h \times \BC_h \times \BD_h$ satisfies the following governing equations:
\begin{subequations}\label{MHD_mixed}
\begin{align}
\label{MHD_mixed_a}
(D_{\tau} \Bu^n_h, \Bv) + R_e^{-1} (\nabla \overline{\Bu}^n_h, \nabla \Bv) + \frac12 [(\Bu_h^{n-1} \cdot \nabla \overline{\Bu}_h^n, \Bv) - (\Bu_h^{n-1} \cdot \nabla \Bv, &  \overline{\Bu}_h^n)] \\
\nonumber - S R^{-1}_m((\nabla_h \times \overline{\BB}_h^n) \times \BB_h^{n-1}, \Bv) - (p^n_h, \nabla \cdot \Bv) & = (\Bf^n, \Bv), \\
\label{MHD_mixed_b}
(\Bj^n_h, \BF) - R_m^{-1} (\overline{\BB}^n_h, \nabla \times \BF) &= 0, \\
\label{MHD_mixed_c} (D_{\tau} \BB_h^n, \BZ) + (\nabla \times \BE^n_h, \BZ) & = 0, \\
\label{MHD_mixed_d} (\nabla \cdot \overline{\Bu}^n_h, q) & = 0, \\
\label{MHD_mixed_e} \Bj^n_h = \BE^n_h + \overline{\Bu}^n_h \times &\BB^{n-1}_h, 
\end{align}
\end{subequations}
for all $(\Bv, q, \BF, \BZ) \in \BV_h \times Q_h \times \BC_h \times \BD_h$. At the initial time step, we take $\Bu^0_h = \BPi_V \Bu_0, \BB^0_h = \BPi_D \BB_0$. Here $\BPi_V \Bu_0, \BPi_D \BB_0$ are projections  (defined in the next secion) of the initial data $\Bu_0, \BB_0$ in the spaces $\BV_h, \BD_h$ respectively. 
Here the discrete {\em curl} $(\nabla_h \times \cdot)$ is a linear map $\BL^2(\Omega) \rightarrow \BC_h$ defined as: given $\BB \in \BL^2(\Omega)$, $\nabla_h \times \BB \in \BC_h$ satisfies
\begin{equation}\label{discrete_curl}
(\nabla_h \times \BB, \BF) = (\BB, \nabla \times \BF) \quad \forall \BF \in \BC_h.
\end{equation}

\begin{remark}
Notice that in the above scheme, the convection term and $\Bj^n_h$ are linear with respect to $\Bu^n_h, \BE^n_h$ repectively. Consequently at each time step, the above scheme leads to a linear system for all the unknowns. Here we also want to remark on the fact that in \eqref{MHD_mixed_b} we replaced $\Bj^n_h$ with $R^{-1}_m \nabla_h \times \overline{\BB}^n_h$ comparing with the original scheme defined in \cite{HuMaXu}. This modification requires a global $L^2-$type projection in the assembly process. Nevertheless, from the analysis below we can see that it is crucial to make such modification in order to obtain the desired optimal error estimates. It is not clear if the analysis remains valid if we keep $\Bj^n_h$ in this term.
\end{remark}

\subsection{Main Result}
We first present the stability of the discrete problem \eqref{MHD_mixed} in the following theorem:

\begin{theorem}\label{stability}
The discrete solution $(\Bu^n_h, p^n_h, \BE^n_h, \BB^n_h)$ satisfies
\[
\frac{\|\Bu^n_h\|^2 - \|\Bu^{n-1}_h\|^2}{2\tau} + R^{-1}_e \|\nabla \overline{\Bu}^n_h\|^2 + S R_m^{-2}\|\nabla_h \times \overline{\BB}^n_h\|^2 + S R^{-1}_m \frac{\|\BB^n_h\|^2 - \|\BB^{n-1}_h\|^2}{2\tau} = (\Bf^n, \overline{\Bu}^n_h).
\]
Consequently, we have for $n=1, 2, \dots, N$:
\begin{align*}
\|\Bu^n_h\|^2 + S R^{-1}_m\|\BB^n_h\|^2 + \tau \sum_{i=1}^n & (R_e^{-1} \|\nabla \overline{\Bu}^i_h\|^2 + 2 S R_m^{-2}\|\nabla_h \times \overline{\BB}^i_h\|^2) \\
& \le \|\Bu^0_h\|^2 + S R^{-1}_m\|\BB^0_h\|^2 + C \tau \sum_{i=1}^n R_e \|\Bf_i\|^2_{-1}. 
\end{align*}

In addition, the magmetic field is exactly divergence free:
\[
\nabla \cdot \BB^n_h = 0, \quad \for n=1,2,\dots, N,
\] 
provided $\nabla \cdot \BB^0_h = 0$.
\end{theorem}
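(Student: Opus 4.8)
The plan is to establish the three assertions in sequence: derive the per-step energy identity first, then sum it for the stability bound, and finally exploit the structure of the discrete de Rham pair $\BC_h$--$\BD_h$ for the divergence-free property. For the energy identity I would test the momentum equation \eqref{MHD_mixed_a} with $\Bv = \overline{\Bu}^n_h$. The discrete time-derivative term collapses to $(D_\tau \Bu^n_h, \overline{\Bu}^n_h) = \tfrac{1}{2\tau}(\|\Bu^n_h\|^2 - \|\Bu^{n-1}_h\|^2)$ via the identity $(a-b,\tfrac{a+b}{2}) = \tfrac12(\|a\|^2-\|b\|^2)$; the skew-symmetric trilinear form vanishes identically when both slots carry $\overline{\Bu}^n_h$; and the pressure term $-(p^n_h, \nabla\cdot\overline{\Bu}^n_h)$ drops out after invoking \eqref{MHD_mixed_d} with $q = p^n_h$. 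This leaves the diffusion term $R_e^{-1}\|\nabla\overline{\Bu}^n_h\|^2$, the forcing $(\Bf^n,\overline{\Bu}^n_h)$, and the Lorentz coupling term as the only survivors.

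The hard part will be reshaping the Lorentz coupling term $-SR_m^{-1}((\nabla_h\times\overline{\BB}^n_h)\times\BB^{n-1}_h, \overline{\Bu}^n_h)$ into the desired magnetic energy contributions; this is exactly the place where the modification flagged in the Remark is essential. First I would apply the scalar triple-product identity to move the discrete curl into the second slot,
\[
((\nabla_h\times\overline{\BB}^n_h)\times\BB^{n-1}_h, \overline{\Bu}^n_h) = -(\overline{\Bu}^n_h\times\BB^{n-1}_h, \nabla_h\times\overline{\BB}^n_h),
\]
and then substitute $\overline{\Bu}^n_h\times\BB^{n-1}_h = \Bj^n_h - \BE^n_h$ from \eqref{MHD_mixed_e}. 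Since $\nabla_h\times\overline{\BB}^n_h\in\BC_h$, it is an admissible test function: taking $\BF = \nabla_h\times\overline{\BB}^n_h$ in \eqref{MHD_mixed_b} (rewritten through \eqref{discrete_curl} as $(\Bj^n_h,\BF)=R_m^{-1}(\nabla_h\times\overline{\BB}^n_h,\BF)$) converts the $\Bj^n_h$ piece into $R_m^{-1}\|\nabla_h\times\overline{\BB}^n_h\|^2$. For the $\BE^n_h$ piece I would use \eqref{discrete_curl} to write $(\nabla_h\times\overline{\BB}^n_h,\BE^n_h) = (\overline{\BB}^n_h,\nabla\times\BE^n_h)$ and then test \eqref{MHD_mixed_c} with $\BZ=\overline{\BB}^n_h$ to identify $(\nabla\times\BE^n_h,\overline{\BB}^n_h) = -\tfrac{1}{2\tau}(\|\BB^n_h\|^2-\|\BB^{n-1}_h\|^2)$. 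Collecting these contributions produces exactly the terms $SR_m^{-2}\|\nabla_h\times\overline{\BB}^n_h\|^2$ and $SR_m^{-1}\tfrac{1}{2\tau}(\|\BB^n_h\|^2-\|\BB^{n-1}_h\|^2)$ appearing in the identity. The crucial observation is that replacing $\Bj^n_h$ by $R_m^{-1}\nabla_h\times\overline{\BB}^n_h$ in \eqref{MHD_mixed_b} is what makes $\nabla_h\times\overline{\BB}^n_h$ available as a legitimate test function here; with the original formulation this self-testing step would not close.

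For the stability estimate I would multiply the energy identity by $2\tau$ and sum over $i=1,\dots,n$, so that the two telescoping differences collapse to $\|\Bu^n_h\|^2-\|\Bu^0_h\|^2$ and $SR_m^{-1}(\|\BB^n_h\|^2-\|\BB^0_h\|^2)$. The only term to control is $2\tau\sum_i(\Bf^i,\overline{\Bu}^i_h)$, which I would bound via the duality pairing and the Poincar\'e--Friedrichs inequality on $\BH^1_0$, giving $(\Bf^i,\overline{\Bu}^i_h)\le \|\Bf^i\|_{-1}\|\overline{\Bu}^i_h\|_{1}\le C\|\Bf^i\|_{-1}\|\nabla\overline{\Bu}^i_h\|$, followed by Young's inequality $C\|\Bf^i\|_{-1}\|\nabla\overline{\Bu}^i_h\|\le \tfrac{R_e}{2}C^2\|\Bf^i\|_{-1}^2 + \tfrac{1}{2R_e}\|\nabla\overline{\Bu}^i_h\|^2$. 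The $\tfrac{1}{2R_e}$ term is absorbed into the available $2\tau R_e^{-1}\|\nabla\overline{\Bu}^i_h\|^2$ on the left, leaving the stated $\tau R_e^{-1}$ dissipation factor for the velocity and the factor $2$ on the magnetic term, while the Poincar\'e constant is swept into the generic $C$ multiplying $\tau R_e\sum_i\|\Bf_i\|_{-1}^2$. No discrete Gronwall argument is needed, since the left-hand side carries no amplifying factor; this is precisely what yields unconditional stability.

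Finally, the divergence-free property follows structurally from the de Rham pairing. Since $\BE^n_h\in\BC_h$ we have $\nabla\times\BE^n_h\in\BD_h$, and all three of $\BB^n_h$, $\BB^{n-1}_h$, $\nabla\times\BE^n_h$ lie in $\BD_h$; equation \eqref{MHD_mixed_c} then states that $\BB^n_h-\BB^{n-1}_h+\tau\,\nabla\times\BE^n_h$ is $L^2$-orthogonal to all of $\BD_h$, hence vanishes, giving the pointwise identity $\BB^n_h = \BB^{n-1}_h - \tau\,\nabla\times\BE^n_h$. Taking the divergence and using $\nabla\cdot(\nabla\times\cdot)\equiv 0$ yields $\nabla\cdot\BB^n_h = \nabla\cdot\BB^{n-1}_h$, and induction on $n$ from the hypothesis $\nabla\cdot\BB^0_h = 0$ completes the proof.
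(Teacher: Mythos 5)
Your proposal is correct and follows essentially the same route as the paper: the energy identity comes from testing \eqref{MHD_mixed_a}--\eqref{MHD_mixed_d} with $(\overline{\Bu}^n_h,\,\nabla_h\times\overline{\BB}^n_h,\,\overline{\BB}^n_h,\,p^n_h)$ (up to the scalar weights $\pm SR_m^{-1}$), the bound follows by summing and absorbing the forcing via Poincar\'e and Young, and the divergence-free property follows from $\nabla\times\BC_h\subset\BD_h$ turning \eqref{MHD_mixed_c} into a pointwise identity. Your substitution of $\Bj^n_h=\BE^n_h+\overline{\Bu}^n_h\times\BB^{n-1}_h$ combined with the triple-product identity is just an explicit unpacking of the ``algebraic simplification'' the paper leaves implicit, and correctly identifies why the modified equation \eqref{MHD_mixed_b} makes $\nabla_h\times\overline{\BB}^n_h$ a usable test function.
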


\begin{proof}
Taking $(\Bv, \BF, \BZ, q) = (\overline{\Bu}^n_h, -S R^{-1}_m \nabla_h \times \overline{\BB}^n_h, S R^{-1}_m \overline{\BB}^n_h, p^n_h)$ in \eqref{MHD_mixed_a} - \eqref{MHD_mixed_d} and adding together, after some algebraic simplification we have:
\begin{align*}
\frac{\|\Bu^n_h\|^2 - \|\Bu^{n-1}_h\|^2}{2\tau} &+ R^{-1}_e \|\nabla \overline{\Bu}^n_h\|^2 + S R^{-2}_m \|\nabla_h \times \overline{\BB}^n_h\|^2 + S R^{-1}_m \frac{\|\BB^n_h\|^2 - \|\BB^{n-1}_h\|^2}{2\tau} = (\Bf^n, \overline{\Bu}^n_h) \\
& \le C \|\Bf^n\|_{-1} \|\nabla \overline{\Bu}^n_h\| \le C R_e \|\Bf^n\|^2_{-1} + \frac12 R^{-1}_e \|\nabla \overline{\Bu}^n_h\|^2. 
\end{align*}
In the above estimate we used the Cauchy-Schwartz inequality, Poincar\'e inequality and Young's inequality. Hence for any $n = 1, 2, \dots, N$ if we sum over the above estimate from $1$ to $k$ we have
\begin{align*}
\|\Bu^n_h\|^2 + S R^{-1}_m\|\BB^n_h\|^2 + \tau \sum_{i=1}^n & (R_e^{-1} \|\nabla \overline{\Bu}^i_h\|^2 + 2 S R_m^{-2}\|\nabla_h \times \overline{\BB}^i_h\|^2) \\
& \le \|\Bu^0_h\|^2 + S R^{-1}_m\|\BB^0_h\|^2 + C \tau \sum_{i=1}^n R_e \|\Bf_i\|^2_{-1}. 
\end{align*}
This completes the proof for the first assersion. For the second part, notice that $\nabla \times \BC_h \subset \BD_h \cap \BH_0(\text{div}0,\Omega)$. Hence \eqref{MHD_mixed_c} is equivalent as
\[
D_{\tau} \BB^n_h + \nabla \times \BE^n_h = 0.
\]
Or 
\[
\frac{\BB^n_h - \BB^{n-1}_h}{\tau} + \nabla \times \BE^n_h = 0.
\]
Taking the divergence of the above equation we have:
\[
\nabla \cdot (\BB^n_h - \BB^{n-1}_h) = 0.
\]
This completes the proof.
\end{proof}

For the error estimates, we assume that the exact solution of MHD system \eqref{MHD_orig} uniquely exists and the unknowns have following regularity property:
\begin{equation}\label{regularity}
\begin{aligned}
&\Bu \in L^{\infty}(0,T; \BH^{1+s}(\Omega)), \Bu_t \in L^2(0, T; \BH^{1 + s}), \Bu_{tt} \in L^{2}(0,T; \BL^2(\Omega)); \\
&p \in L^{\infty}(0,T; H^s(\Omega)), p_t \in L^2(0,T; H^s(\Omega)); \\
&\BB, \nabla \times \BB \in L^{\infty}(0, T; \BH^s(\Omega)), \BB_t, \nabla \times \BB_t, \BB_{tt} \in L^2(0,T; L^2(\Omega)) \\
&\BE, \nabla \times \BE \in L^{\infty}(0,T; H^s(\Omega)),
\end{aligned}
\end{equation}
where $s > \frac12$. Under this assumption, our main error estimate result can be summarized as follows:

\begin{theorem}\label{main_error}
Let $(\Bu, p, \BB, \BE)$ be the exact solution of \eqref{MHD_orig} with the above regularity \eqref{regularity} holds. Let $(\Bu_h, p_h, \BB_h, \BE_h)$ be the numerical solution of the discrete system \eqref{MHD_mixed}. Then we have for all $n = 1, 2, \cdots, N$
\begin{align}\label{energy_all}
\|\Bu^n -\Bu^n_h\|^2 + \|\BB^n - \BB^n_h\|^2 &+ C \tau \sum_{j = 1}^n ( \|\nabla \overline{\Bu}^n - \nabla \overline{\Bu}^n_h\|^2 + \|\nabla_h \times \overline{\BB}^n - \nabla_h \times \overline{\BB}^n_h\|^2) \\
\nonumber
& \le e^{2CT} (h^{2\beta} + \tau^2),
\end{align}
at each time step, we also have
\begin{equation}\label{est_Bu_tn}
\|\nabla {\Bu}^n - \nabla {\Bu}^n_h\|^2 + \|\nabla_h \times {\BB}^n - \nabla_h \times {\BB}^n_h\|^2 \le C (h^{2\beta} + \tau^2).
\end{equation}
with $\beta = \min \{s, k+1\}$ and $C$ depends on the physical parameters but is independent of the discrete paramters $\tau$ and $h$. Further, at each time step, we have
\begin{equation}\label{estimate_e1}
\|\BE^n - \BE^n_h\|^2 \le C (\tau + h^{2\beta}).
\end{equation}
\begin{equation}\label{estimate_p}
\|p^n - p^n_h\|^2 \le C (\tau^{-1} h^{2\beta} + \tau).
\end{equation}
If we further assume that $\Bu_t \in L^{\infty}(0, T; \BH^1(\Omega)); \BB_t, \nabla \times \BB_t \in L^{\infty}(0, T; \BL^2(\Omega))$, we have that:
\begin{equation}\label{estimate_e2}
\|\BE^n - \BE^n_h\| \le C (\tau^2 + h^{2\beta}).
\end{equation}
\end{theorem}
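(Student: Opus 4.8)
The plan is to follow the classical project--subtract--Gronwall route, arranged so that the very test functions that produce the stability identity of Theorem~\ref{stability} also close the error analysis. First I would split every error into a projection part and a discrete part, writing
\begin{equation*}
\Bu^n-\Bu^n_h=(\Bu^n-\BPi_V\Bu^n)+(\BPi_V\Bu^n-\Bu^n_h)=:\boldsymbol{\theta}^n_{\Bu}+\Be^n_{\Bu},
\end{equation*}
and likewise for $\BB$ (via $\BPi_D$), for $\BE$ (via the $\BC_h$-interpolant), and for $p$ (via the $Q_h$-projection). The projection parts $\boldsymbol{\theta}$ are controlled by the approximation estimates of Section~3 and are responsible for the $h^{\beta}$ rates, $\beta=\min\{s,k+1\}$; the key structural fact I would exploit is that $\BC_h,\BD_h$ lie in one finite element de~Rham complex \cite{ArnoldFalkWinther06}, so the canonical interpolants commute with the curl, $\nabla\times\BPi_C=\BPi_D\,\nabla\times$. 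This keeps the exactly divergence-free structure intact inside the error equations and lets the discrete curl $\nabla_h\times$ interact cleanly with the projections.

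Next I would form the error equations by subtracting \eqref{MHD_mixed} from the weak form of \eqref{MHD_orig} tested against the same $(\Bv,q,\BF,\BZ)$. This yields equations of precisely the shape of \eqref{MHD_mixed} for $(\Be_{\Bu},\Be_{\BB},\Be_{\BE},e_p)$, whose right-hand sides gather (i) the projection residuals $\boldsymbol{\theta}$ and (ii) the consistency errors of the Crank--Nicolson-in-time, lagged-nonlinearity discretization. This is where \eqref{regularity} enters: $D_\tau\Bu^n$ and $\overline{\Bu}^n$ differ from $\Bu_t$ and $\Bu$ at the midpoint $t^{n-1/2}$ by $O(\tau^2)$ once $\Bu_{tt},\BB_{tt}\in L^2(\BL^2)$, and the lagging of the nonlinear factors to level $n-1$ contributes further $O(\tau)$ residuals that I would track explicitly.

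The core of the argument is the energy estimate \eqref{energy_all}, obtained by testing the error equations with $(\Bv,\BF,\BZ,q)=(\overline{\Be}^n_{\Bu},\,-SR_m^{-1}\nabla_h\times\overline{\Be}^n_{\BB},\,SR_m^{-1}\overline{\Be}^n_{\BB},\,e^n_p)$, exactly mirroring Theorem~\ref{stability}. Two structural features make this close: the convection form $b(\Bw;\Ba,\Bc)=\tfrac12[(\Bw\cdot\nabla\Ba,\Bc)-(\Bw\cdot\nabla\Bc,\Ba)]$ is skew-symmetric, so $b(\Bu^{n-1}_h;\overline{\Be}^n_{\Bu},\overline{\Be}^n_{\Bu})=0$ and only cross terms with $\boldsymbol{\theta}$ and the exact solution survive; and the Lorentz coupling in \eqref{MHD_mixed_a} cancels against the electromotive contribution of \eqref{MHD_mixed_b}--\eqref{MHD_mixed_c} just as in the stability proof, so no pointwise control of the discrete curl is needed on the left. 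Bounding the surviving cross terms by the dissipation $R_e^{-1}\|\nabla\overline{\Be}^n_{\Bu}\|^2+SR_m^{-2}\|\nabla_h\times\overline{\Be}^n_{\BB}\|^2$ plus $C(\|\Be^n_{\Bu}\|^2+\|\Be^{n-1}_{\Bu}\|^2+\|\Be^n_{\BB}\|^2+\|\Be^{n-1}_{\BB}\|^2)$ and data $C(h^{2\beta}+\tau^2)$, then summing and applying the discrete Gronwall lemma, delivers \eqref{energy_all} with its $e^{2CT}$ factor. The hard part lives precisely here: since only $s>\tfrac12$ is assumed, $\BB$ and $\nabla\times\BB$ need not lie in $\BL^\infty$, so the trilinear and Lorentz cross terms must be estimated using only $\Bu\in\BL^\infty$ (from $\BH^{1+s}\hookrightarrow\BL^\infty$) together with $\BB,\nabla\times\BB\in\BH^s$ via Hölder and Sobolev embeddings, never invoking $\Bj\in\BL^\infty$ as in \cite{MaXuZhang2016}. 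This is exactly what the modification noted in the Remark buys; making the Hölder bookkeeping close with optimal powers of $h$ and $\tau$ is the delicate step.

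The remaining estimates I would derive from the energy bound. For the pointwise gradient/curl estimate \eqref{est_Bu_tn} I would run a second energy estimate, testing the error equations with the time differences $D_\tau\Be^n_{\Bu}$ and the matching magnetic choice; the diffusion and induction terms telescope into $\|\nabla\Be^n_{\Bu}\|^2$ and $\|\nabla_h\times\Be^n_{\BB}\|^2$, while $\Bu_{tt},\BB_{tt}\in L^2(\BL^2)$ control the $D_\tau$ of the consistency residuals and a summed bound on $\|D_\tau\Be^n_{\Bu}\|$ falls out as a by-product. The pressure estimate \eqref{estimate_p} then follows from the inf-sup condition \eqref{infsup}: $\|e^n_p\|_0\lesssim\sup_{\Bv}(e^n_p,\nabla\cdot\Bv)/\|\Bv\|_1$, where the supremum equals all the other terms of the momentum error equation, and the factor $\|D_\tau\boldsymbol{\theta}^n_{\Bu}\|^2\le C\tau^{-1}h^{2\beta}$ produced by differencing the velocity projection residual accounts for the characteristic $\tau^{-1}h^{2\beta}$ loss. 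Finally, for the electric field I would reconstruct $\BE^n_h=R_m^{-1}\nabla_h\times\overline{\BB}^n_h-\overline{\Bu}^n_h\times\BB^{n-1}_h$ from \eqref{MHD_mixed_b} and \eqref{MHD_mixed_e} and compare it with $\BE=R_m^{-1}\nabla\times\BB-\Bu\times\BB$; the curl error is supplied by \eqref{est_Bu_tn}, while the temporal-consistency pieces such as $\nabla\times(\BB^n-\BB^{n-1})$ are only $O(\tau^{1/2})$ in $\BL^2$ under $\nabla\times\BB_t\in L^2(\BL^2)$, giving \eqref{estimate_e1}, and sharpen once $\nabla\times\BB_t,\BB_t\in L^\infty(\BL^2)$ and $\Bu_t\in L^\infty(\BH^1)$, which is what upgrades \eqref{estimate_e1} to \eqref{estimate_e2}.
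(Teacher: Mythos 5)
Your overall architecture coincides with the paper's: split each error into a projection part and a discrete part, subtract the schemes to get error equations, test with $(\overline{\Be}^n_{\Bu},\,-SR_m^{-1}\nabla_h\times\overline{\Be}^n_{\BB},\,SR_m^{-1}\overline{\Be}^n_{\BB},\,e^n_p)$ exactly as in the stability proof, exploit the skew-symmetry of the convection form and the Lorentz/electromotive cancellation, and close with the discrete Gronwall lemma; the treatments of $\BE$ (via the error equation for \eqref{MHD_mixed_b}) and of $p$ (inf-sup plus a separate bound on $\|D_\tau \Be^n_{\Bu}\|$ obtained by testing the momentum error equation with $D_\tau\Be^n_{\Bu}$) also match.

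There is, however, one concrete gap: your choice of projection for $\BB$. You invoke the commuting diagram $\nabla\times\BPi_C=\BPi_D\nabla\times$ of the canonical interpolants as the ``key structural fact.'' That identity is true for the canonical N\'ed\'elec/BDM interpolants, but it is not the property the argument needs, and with the canonical BDM interpolant the estimate does not close under \eqref{regularity}. Two facts are indispensable: (i) $\bigl(D_\tau(\BB^n-\BPi_D\BB^n),\overline{\Be}^n_{\BB}\bigr)=0$, and (ii) $\nabla_h\times(\BB-\BPi_D\BB)=\boldsymbol{0}$, which annihilates the Lorentz cross term $\bigl((\nabla_h\times\overline{\Bdelta}^n_{\BB})\times\BB^{n-1},\overline{\Be}^n_{\Bu}\bigr)$. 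Both follow only because the paper takes $\BPi_D$ to be the $L^2$-projection onto the discretely divergence-free subspace $\BD^0_h=\BD_h\cap\BH_0(\mathrm{div}0,\Omega)$, combined with the definition \eqref{discrete_curl} of $\nabla_h\times$. If instead you bound the term in (i) by an approximation estimate, you need $\|\BB_t\|_{\BH^\beta}$, which is simply not available: \eqref{regularity} gives $\BB_t\in L^2(0,T;\BL^2)$ and nothing more in space. So the term must vanish identically, not be small. Replace ``canonical interpolant with commuting diagram'' by ``$L^2$-projection onto $\BD^0_h$'' and your plan goes through. Two minor further remarks: your second energy estimate with test function $D_\tau\Be^n_{\Bu}$ for \eqref{est_Bu_tn} is a legitimate (indeed more systematic) alternative to the paper's terser derivation, which extracts \eqref{est_Bu_tn} from the single-step inequality already obtained; and for the residual Lorentz cross terms you should make explicit the insertion/cancellation pairing $(\Ba\times\Bb,\Bc)+(\Bc\times\Bb,\Ba)=0$ together with swapping $\overline{\Bu}^n_h,\nabla_h\times\overline{\BB}^n_h$ for $\BPi_V\overline{\Bu}^n,\nabla_h\times\BPi_D\overline{\BB}^n$, since this is precisely where the low-regularity bookkeeping is decided.
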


\section{Auxiliary estimates}
In this section, we gather the necessary tools for the final error estimates in the next section. First we present an approximation property for the discrete {\em curl} operator:

\begin{lemma}\label{curl_h}
For any vector field $\BC \in \BH(\text{curl}, \Omega)$, we have
\[
\|\nabla_h \times \BC\|_{L^p(\Omega)} \le \|\nabla \times \BC\|_{L^p(\Omega)}, 
\]
with any $p \in (1, \infty)$.
\end{lemma}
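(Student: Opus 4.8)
The plan is to recognize the discrete curl of a genuine $\BH(\curl,\Omega)$ field as an $L^2$-orthogonal projection of the true curl, and then to exploit the stability of that projection. Throughout, write $P_h : \BL^2(\Omega) \to \BC_h$ for the $L^2$-orthogonal projection onto $\BC_h$.

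First I would unravel the definition \eqref{discrete_curl}. For $\BC \in \BH(\curl,\Omega)$ and any test function $\BF \in \BC_h$, the defining relation reads $(\nabla_h \times \BC, \BF) = (\BC, \nabla \times \BF)$. Since $\BC_h \subset \BH_0(\curl,\Omega)$, every $\BF$ has vanishing tangential trace $\BF \times \Bn = 0$ on $\partial\Omega$, so the boundary contribution in the integration-by-parts identity for the curl drops out and $(\BC, \nabla \times \BF) = (\nabla \times \BC, \BF)$. Combining the two gives $(\nabla_h \times \BC - \nabla \times \BC, \BF) = 0$ for all $\BF \in \BC_h$; as $\nabla_h \times \BC \in \BC_h$, this is exactly $\nabla_h \times \BC = P_h(\nabla \times \BC)$. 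This identification is the structural core of the argument: it reduces the lemma to an $L^p$-stability statement for $P_h$ applied to the $\BL^2$ field $\nabla \times \BC$.

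For $p = 2$ the conclusion is immediate, since an $L^2$-orthogonal projection is non-expansive: $\|\nabla_h \times \BC\| = \|P_h(\nabla \times \BC)\| \le \|\nabla \times \BC\|$. For general $p \in (1,\infty)$ I would argue by duality. Let $q$ be the conjugate exponent and take $\Bg \in \BL^q(\Omega)$ with $\|\Bg\|_{L^q} = 1$. Because $\nabla_h \times \BC \in \BC_h$ and $\Bg - P_h \Bg$ is $L^2$-orthogonal to $\BC_h$, I get $(\nabla_h \times \BC, \Bg) = (\nabla_h \times \BC, P_h \Bg) = (\nabla \times \BC, P_h \Bg)$, the last step using the Galerkin relation just established. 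Hölder's inequality bounds this by $\|\nabla \times \BC\|_{L^p}\,\|P_h \Bg\|_{L^q}$, and taking the supremum over admissible $\Bg$ yields $\|\nabla_h \times \BC\|_{L^p} \le \|\nabla \times \BC\|_{L^p}\, \sup_{\|\Bg\|_{L^q}=1}\|P_h \Bg\|_{L^q}$.

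The main obstacle is therefore the $L^q$-stability of the $L^2$-orthogonal projection onto $\BC_h$, that is, controlling $\|P_h \Bg\|_{L^q}$ by $\|\Bg\|_{L^q}$. This is precisely where the shape-regularity and quasi-uniformity of $\mathcal{T}_h$ assumed in the setup are used, through the standard $L^p$-stability theory for $L^2$ projections onto finite element spaces (local approximation combined with inverse estimates on the quasi-uniform mesh). I would invoke such a result to close the estimate; securing it with the sharp constant equal to one, as the statement asserts, is the one genuinely delicate point. Everything else — the integration by parts, the projection identification, and the duality pairing — is routine bookkeeping once the characterization $\nabla_h \times \BC = P_h(\nabla \times \BC)$ is in hand.
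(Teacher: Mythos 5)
Your argument is essentially the paper's own proof: both identify $\nabla_h \times \BC$ as the $L^2$-orthogonal projection of $\nabla \times \BC$ onto $\BC_h$ via integration by parts, and then invoke the $L^p$-stability of that projection on quasi-uniform meshes (the paper cites Crouzeix--Thom\'ee \cite{CT1987}; your duality reduction to $L^q$-stability is just the standard route to the same result). The ``delicate point'' you flag about the constant being exactly one is real but applies equally to the paper, whose proof likewise only yields $C_p\|\nabla\times\BC\|_{L^p}$ while the lemma is stated with constant $1$; this is harmless for the way the lemma is used downstream.
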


\begin{proof}
Define $\BPi_h: \BL^2(\Omega) \rightarrow \BC_h$ be the standard $L^2$-projection. By the definition of 
the discrete {\em curl} operator \eqref{discrete_curl} we have for any $\BC \in \BH(\text{curl}, \Omega)$
\[
(\nabla_h \times \BC, \BF) = (\BC, \nabla \times \BF) = (\nabla \times \BC, \BF) \quad \; \forall \BF \in \BC_h.
\]
Therefore, this implies that $\nabla_h \times \BC = \BPi_h (\nabla \times \BC)$. 
Similar to the proof of \cite[Theorem~$3$]{CT1987}, we have:
\[
\|\nabla_h \times \BC\|_{L^p(\Omega)} = \|\BPi_h (\nabla \times \BC)\|_{L^p(\Omega)} 
\le C_{p} \|\nabla \times \BC\|_{L^p(\Omega)}, 
\]
with any $p \in [1, +\infty]$.
\end{proof}

The next result gathers classical and discrete Sobolev inequalities needed for the error estimates in the next section \cite{Nirenberg1966,Hiptmair2002}.

\begin{lemma}\label{sobolev_embed}
For $\Bu \in \BH^{1 + s}(\Omega)$ with $s > \frac12$ we have
\begin{align*}
\|\Bu\|_{0,p} &\le C \|\Bu\|_1, \quad \text{for} \quad 1 \le p \le 6, \\
\|\Bu\|_{0, \infty} &\le C \|\Bu\|_{1+s}.
\end{align*}
For $\BB \in \BH^{s}(\Omega)$ with $s > \frac12$, we have 
\[
\|\BB\|_{0,3} \le C \|\BB\|_s.
\]
Further, for $\BB \in \BH^{s}(\Omega) \cap \BH(\text{div}0,\Omega)$, we have
\[
\|\BB\|_{0,3} \le C \|\BB\|_s \le C \|\nabla \times \BB\|.
\]
\end{lemma}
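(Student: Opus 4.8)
The plan is to treat the four displayed estimates separately, since the first three are classical Sobolev embeddings while only the last line carries genuine content. I would simply cite the appropriate inequalities from the references \cite{Nirenberg1966,Hiptmair2002} rather than reprove standard material.

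For $\|\Bu\|_{0,p}\le C\|\Bu\|_1$ with $1\le p\le 6$, I would invoke the critical embedding $H^1(\Omega)\hookrightarrow L^6(\Omega)$ on the bounded Lipschitz domain $\Omega\subset\mathbb{R}^d$, $d\le 3$: when $d=3$ the Sobolev conjugate $2d/(d-2)$ is exactly $6$, and when $d=2$ every finite exponent is admissible. Boundedness of $\Omega$ then gives $L^6(\Omega)\hookrightarrow L^p(\Omega)$ for all $p\le 6$. For $\|\Bu\|_{0,\infty}\le C\|\Bu\|_{1+s}$ I would use $H^{1+s}(\Omega)\hookrightarrow L^\infty(\Omega)$, valid precisely because $1+s>d/2$ once $s>\tfrac12$ and $d\le 3$. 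Finally $\|\BB\|_{0,3}\le C\|\BB\|_s$ is the fractional embedding $H^s(\Omega)\hookrightarrow L^3(\Omega)$, which holds already in the critical case $d=3$, $s=\tfrac12$ (where $2d/(d-2s)=3$) and hence for every $s>\tfrac12$ by monotonicity of the scale. These are immediate consequences of the Gagliardo--Nirenberg--Sobolev theory.

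The only nontrivial inequality is the right-hand bound on the last line, $\|\BB\|_s\le C\|\nabla\times\BB\|$ for a divergence-free $\BB$ carrying the vanishing normal trace $\BB\cdot\Bn=0$ inherited from $\BD_h\subset\BH_0(\text{div},\Omega)$. I would carry this out in two stages. First, I would invoke the embedding regularity of $\BH(\text{curl},\Omega)\cap\BH_0(\text{div},\Omega)$ on a Lipschitz polyhedron, which furnishes an exponent $s>\tfrac12$ together with $\|\BB\|_s\le C(\|\nabla\times\BB\|+\|\nabla\cdot\BB\|+\|\BB\|)$; since $\nabla\cdot\BB=0$ the divergence term drops. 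Second, I would remove the remaining $L^2$ term through the Friedrichs-type inequality $\|\BB\|\le C\|\nabla\times\BB\|$ on $\BH(\text{curl},\Omega)\cap\BH_0(\text{div}0,\Omega)$, which holds once the space of harmonic Neumann fields $\{\BB:\nabla\times\BB=0,\ \nabla\cdot\BB=0,\ \BB\cdot\Bn=0\}$ is trivial, e.g. on a simply connected $\Omega$. Chaining these two estimates yields $\|\BB\|_s\le C\|\nabla\times\BB\|$, and composing with $\|\BB\|_{0,3}\le C\|\BB\|_s$ gives the full chain.

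I expect the embedding-regularity step to be the main obstacle: the continuous inclusion $\BH(\text{curl},\Omega)\cap\BH_0(\text{div}0,\Omega)\hookrightarrow\BH^s(\Omega)$ with an explicit $s>\tfrac12$ on a general Lipschitz polyhedron is a delicate result (Amrouche--Bernardi--Dauge--Girault; Costabel), and one must be careful to pair the correct boundary condition with the correct Friedrichs inequality so that the cohomology contribution does not obstruct controlling $\|\BB\|$ by $\|\nabla\times\BB\|$ alone. Rather than reproving these, I would cite the statements in \cite{Hiptmair2002} and match them to the normal boundary condition used by $\BD_h$ in the scheme.
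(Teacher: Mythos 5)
The paper offers no proof of this lemma at all---it simply records the statements with a pointer to \cite{Nirenberg1966,Hiptmair2002}---and your assembly of the standard embeddings $H^1\hookrightarrow L^6$, $H^{1+s}\hookrightarrow L^\infty$ and $H^s\hookrightarrow L^3$ together with the $\BH(\mathrm{curl},\Omega)\cap\BH_0(\mathrm{div},\Omega)\hookrightarrow\BH^s(\Omega)$ regularity and a Friedrichs-type inequality is exactly the intended justification, so your route is essentially the paper's. Your observation that the final chain $\|\BB\|_s\le C\|\nabla\times\BB\|$ additionally requires the vanishing normal trace $\BB\cdot\Bn=0$ and triviality of the harmonic Neumann fields---hypotheses the lemma's stated condition $\BB\in\BH^{s}(\Omega)\cap\BH(\text{div}0,\Omega)$ omits but which hold for the fields to which the lemma is applied---is a correct and worthwhile precision.
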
 

Next we define the projections of the unknowns $(\BPi_V \Bu, \Pi_Q p, \BPi_C \BE, \BPi_D \BB )$ and gather their approximation properties. 
For the fluid pair $\Bu, p$, we follow the idea used in \cite{GaoQiu2019}. Namely, for a fixed $t \in (0, T]$, for the exact solution $(\Bu, p) \in \BH^1_0(\Omega) \times L^2_0(\Omega)$ we define the Stokes projection $(\BPi_V \Bu, \Pi_Q p) \in \BV_h \times Q_h$ satisfies
\begin{subequations}\label{Stokes_proj}
\begin{align}
R^{-1}_e (\nabla \BPi_V \Bu, \nabla \Bv) - (\Pi_Q p, \nabla \cdot \Bv) &= R^{-1}_e (\nabla \Bu, \nabla \Bv) - (p, \nabla \cdot \Bv), \\
(\nabla \cdot \BPi_V \Bu, q) & = (\nabla \cdot \Bu, q), 
\end{align}
\end{subequations}
for all $(\Bv, q) \in \BV_h \times Q_h$. We can see that the above projection is defined globally over $\Omega$ through the variational form of Stokes equations. 
For the electric field $\BE$ we simply use the Ned\'el\'ec $\BH$-curl projection  \cite{Monk2003}, denoted by $\BPi_C E$.

Finally, for the magnetic field $\BB$, notice that $\BB_h \in \BD^0_h := \BD_h \cap \BH_0(\text{div}0, \Omega)$ and $\BB \in \BH_0(\text{div}0, \Omega)$. We define the $L^2$-projection $\BPi_D: \BL^2(\Omega) \rightarrow \BD^0_h$ such that $\BPi_D \BB \in \BD^0_h$ satisfies:
\begin{equation}\label{div_proj}
(\BPi_D \BB, \BZ) = (\BB, \BZ) \quad \forall \BZ \in \BD^0_h.
\end{equation}

We have the following approximation property result for the projections \cite{Girault.V;Raviart.P.1986a, GaoQiu2019}:
\begin{lemma}\label{proj_prop}
Under the regularity assumption \eqref{regularity}, the above projection satisfies
\begin{align*}
\|\Bdelta_u\|_1 + \|\Bdelta_p\| &\le C h^\beta (\|\Bu\|_{1+\beta} + \|p\|_\beta), \\
\|\frac{\partial \Bdelta_u}{\partial t}\|_1 &\le C h^\beta (\|\Bu_t\|_{1+\beta} + \|p_t\|_\beta),\\
\|\BPi_V \Bu\|_{\infty} + \|\BPi_V \Bu\|_{1,3} &\le C(\|\Bu\|_{1+\beta} + \|p\|_\beta) < \infty, \\
\|\Bdelta_E\| + \|\nabla \times \Bdelta_E\| &\le C h^s (\|\BE\|_\beta + \|\nabla \times \BE\|_\beta), \\
\|\Bdelta_B\| &\le C h^\beta \|\BB\|_\beta, \\
\nabla_h \times \Bdelta_B &= \boldsymbol{0}.
\end{align*}
with $\beta = \min \{s, k+1\}$.
\end{lemma}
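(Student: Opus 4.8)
The three projections decouple, so the plan is to prove the bounds in three groups: the Stokes pair $(\BPi_V\Bu,\Pi_Q p)$, the N\'ed\'elec projection $\BPi_C\BE$, and the divergence-free $L^2$-projection $\BPi_D\BB$. For the Stokes pair, the first estimate is the classical quasi-optimal bound: since the Taylor--Hood pair satisfies the discrete inf-sup condition \eqref{infsup}, the map $(\Bu,p)\mapsto(\BPi_V\Bu,\Pi_Q p)$ defined by \eqref{Stokes_proj} is a stable projection, and a standard C\'ea-type argument gives $\|\Bdelta_u\|_1+\|\Bdelta_p\|\le C\inf_{(\Bv_h,q_h)}(\|\Bu-\Bv_h\|_1+\|p-q_h\|)$; inserting the standard interpolants produces the rate $h^\beta$ (cf. \cite{Girault.V;Raviart.P.1986a}). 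The second estimate then follows by differentiating \eqref{Stokes_proj} in time: the bilinear forms are time-independent and the projection is linear, so $(\partial_t\BPi_V\Bu,\partial_t\Pi_Q p)$ is exactly the Stokes projection of $(\Bu_t,p_t)$, and the first estimate applied to $(\Bu_t,p_t)$ yields the bound on $\partial_t\Bdelta_u$.

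The third estimate, the $W^{1,3}$- and $L^\infty$-stability of $\BPi_V\Bu$, is the delicate one. By the triangle inequality $\|\BPi_V\Bu\|_{1,3}\le\|\Bu\|_{1,3}+\|\Bdelta_u\|_{1,3}$, and likewise in $L^\infty$; the continuous parts $\|\Bu\|_{1,3}$ and $\|\Bu\|_{0,\infty}$ are controlled by $\|\Bu\|_{1+\beta}$ through the Sobolev embeddings of Lemma \ref{sobolev_embed} (valid since $s>\tfrac12$), while for the discrete error I would invoke the refined $W^{1,p}$-/$L^\infty$-stability of the Stokes projection established in \cite{GaoQiu2019}. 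The N\'ed\'elec estimate is then routine: the bound on $\|\Bdelta_E\|+\|\nabla\times\Bdelta_E\|$ is the standard interpolation/commuting estimate for the second-type $\BH(\curl)$ element taken directly from \cite{Monk2003}.

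For the magnetic field, both claims rest on the de Rham compatibility of $\BC_h$ and $\BD_h$. By the definition \eqref{discrete_curl}, the identity $\nabla_h\times\Bdelta_B=\boldsymbol{0}$ is equivalent to $(\Bdelta_B,\nabla\times\BF)=0$ for every $\BF\in\BC_h$. Since $\BF\times\Bn=\boldsymbol{0}$ on $\partial\Omega$ and $\nabla\times\BC_h\subset\BD_h$, the field $\nabla\times\BF$ lies in $\BD^0_h=\BD_h\cap\BH_0(\text{div}0,\Omega)$, so the orthogonality \eqref{div_proj} defining $\BPi_D$ gives $(\BB-\BPi_D\BB,\nabla\times\BF)=0$, and hence $\nabla_h\times\Bdelta_B=\boldsymbol{0}$. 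For the $L^2$-error I use that $\BPi_D\BB$ is the best $L^2$-approximation of $\BB$ in $\BD^0_h$, so it suffices to produce one competitor: the canonical BDM interpolant $\Pi^{\mathrm{BDM}}\BB$ satisfies $\nabla\cdot\Pi^{\mathrm{BDM}}\BB=\Pi_{\mathrm{DG}}(\nabla\cdot\BB)=0$ by the commuting diagram and has vanishing normal trace, hence lies in $\BD^0_h$ and obeys the optimal bound $\|\BB-\Pi^{\mathrm{BDM}}\BB\|\le Ch^\beta\|\BB\|_\beta$; since $\|\Bdelta_B\|\le\|\BB-\Pi^{\mathrm{BDM}}\BB\|$ the estimate follows.

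The main obstacle is the third Stokes bound. A naive reduction of $\|\Bdelta_u\|_{0,\infty}$ to the energy norm through inverse inequalities loses a factor $h^{-3/2}$ in three dimensions, which is not absorbable when $\beta$ is close to $\tfrac12$; one therefore genuinely needs the sharper maximal-regularity-type stability of the Stokes projection rather than a direct inverse-estimate argument, and this is precisely the ingredient imported from \cite{GaoQiu2019}.
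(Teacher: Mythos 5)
Your proposal is correct and follows essentially the same route as the paper: the Stokes, N\'ed\'elec, and time-differentiated bounds are treated as standard results imported from \cite{Girault.V;Raviart.P.1986a,GaoQiu2019,Monk2003}, the $L^2$ bound for $\Bdelta_B$ is obtained by comparing the best approximation $\BPi_D\BB$ against the BDM interpolant (which lies in $\BD^0_h$), and the identity $\nabla_h\times\Bdelta_B=\boldsymbol{0}$ follows from the definition of the discrete curl together with the $L^2$-orthogonality of $\BPi_D$ and the inclusion $\nabla\times\BC_h\subset\BD^0_h$. Your write-up is in fact slightly more careful than the paper's, since you make explicit why $\nabla\times\BF\in\BD^0_h$ and why the BDM interpolant is an admissible competitor.
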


\begin{proof}
It suffice to establish the last two inequality and identity since others are well-known results \cite{Girault.V;Raviart.P.1986a}. Notice that since $\BB \in \BH_0(\text{div}0, \Omega)$ we have that its BDM projection $\BPi_{\text{BDM}}\BB \in \BD^0_h$. This implies that 
\begin{equation}\label{approx_proj_B}
\|\Bdelta_B\| \le \|\BB - \BPi_{\text{BDM}} \BB\| \le C h^{\beta} \|\BB\|_\beta.
\end{equation}
For the last identity, we can derive this identity by the definition of ``$\nabla_h \times$'' \eqref{discrete_curl} and the projection $\BPi_D$ is $L^2$-projection onto $\BD^0_h$: for any $\BF \in \BC_h$
\[
(\nabla_h \times \BPi_D \BB, \BF)  = (\BPi_D \BB, \nabla \times \BF) = (\BB, \nabla \times \BF) = (\nabla_h \times \BB, \BF).
\]
This completes the proof since $\nabla_h \times \BB, \nabla_h \times \BPi_D \BB \in \BC_h$.
\end{proof}

As a consequence of the above result, we have that the intial errors satisfy:
\begin{equation}\label{initial_eror}
\|\Bu^0 - \Bu^0_h\|_1 + \|\BB^0 - \BB^0_h\| \le C h^{\beta}, \quad \nabla_h \times \BB^0 - \nabla_h \times \BB^0_h = \boldsymbol{0}.
\end{equation}  
Finally, we need the well-known discrete Gronwall's inequality \cite{HeywoodRannacher1990}:

\begin{lemma}\label{discrete_gronwall}
Let $\tau, B$ and $a_k, b_k, c_k, \gamma_k$ be non-negative numbers for all integers $k \ge 0$, 
\[
a_J + \tau \sum^J_{k = 0} b_k \le \tau \sum^J_{k =0} \gamma_k a_k + \tau \sum^J_{k= 0} c_k + B, \quad \text{for} \quad J \ge 0,
\]
suppose that $\tau \gamma_k < 1$ for all $k$ and set $\sigma_k = (1 - \tau \gamma_k)^{-1}$, Then it holds:
\[
a_J + \tau \sum^J_{k = 0} b_k \le e^{\tau \sum^J_{k=0} \gamma_k \sigma_k} (\tau \sum^J_{k=0} c_k + B).
\]
\end{lemma}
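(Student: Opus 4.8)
The statement is the standard discrete Gronwall inequality, and the plan is to reduce it to a one-step recursion and then close an induction on $J$. First I would absorb the diagonal term. Writing $S_J := \tau\sum_{k=0}^J c_k + B$ and $E_J := a_J + \tau\sum_{k=0}^J b_k$, I separate the $k=J$ summand in $\tau\sum_{k=0}^J \gamma_k a_k$ and move $\tau\gamma_J a_J$ to the left-hand side of the hypothesis, which is legitimate since $1 - \tau\gamma_J > 0$ by assumption. Multiplying through by $\sigma_J = (1-\tau\gamma_J)^{-1}\ge 1$ and using both $\sigma_J \ge 1$ and the trivial bound $a_k \le E_k$ (the $b$-sum being nonnegative), I arrive at the recursion
\[
E_J \le \sigma_J\,\tau\sum_{k=0}^{J-1}\gamma_k E_k + \sigma_J S_J .
\]

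The engine of the whole argument is the elementary inequality $\sigma_k \le e^{\tau\gamma_k\sigma_k}$, which follows from the identity $\sigma_k = 1 + \tau\gamma_k\sigma_k$ (rearrange $\sigma_k(1-\tau\gamma_k)=1$) together with $1+x\le e^x$. Setting $P_J := \exp\!\big(\tau\sum_{k=0}^J \gamma_k\sigma_k\big)$ with $P_{-1}:=1$, I claim $E_J \le P_J S_J$ for all $J$ and prove it by strong induction. Feeding $E_k \le P_k S_k \le P_k S_J$ into the recursion (the last step using that $S_k$ is nondecreasing in $k$) reduces the inductive step to the purely deterministic estimate $\sigma_J\big(1 + \tau\sum_{k=0}^{J-1}\gamma_k P_k\big) \le P_J$. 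I would establish this via an auxiliary induction showing $Q_J := 1 + \tau\sum_{k=0}^{J-1}\gamma_k P_k \le P_{J-1}$: the step $Q_{J+1} = Q_J + \tau\gamma_J P_J \le P_{J-1} + \tau\gamma_J P_J \le P_J$ is, after dividing by $P_{J-1}$, exactly equivalent to $\sigma_J \le e^{\tau\gamma_J\sigma_J}$, so the same elementary inequality does all the work. Combining $Q_J \le P_{J-1}$ with $\sigma_J P_{J-1}\le P_J$ then closes the main induction.

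The individual steps are routine; the point I would watch most carefully is the placement of the factors $\sigma_k$ versus $\gamma_k$ and the index shift between $P_J$ and $P_{J-1}$, since it is precisely the \emph{gap} in $\sigma_k \le e^{\tau\gamma_k\sigma_k}$ (not the weaker $1 \le e^{\tau\gamma_k\sigma_k}$) that must be spent to convert each factor $\sigma_J$ into exponential growth without losing the constant in the final bound. The monotonicity of $S_J$ used to replace $S_k$ by $S_J$ should be stated explicitly, as it relies on $c_k, B \ge 0$; no regularity or structural hypotheses beyond nonnegativity and $\tau\gamma_k<1$ enter the argument.
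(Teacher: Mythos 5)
The paper does not actually prove Lemma~\ref{discrete_gronwall}; it is quoted as a known result with a citation to Heywood--Rannacher, so there is no in-paper argument to compare against. Your proposal is a correct, self-contained proof of the standard statement. The reduction to the one-step recursion $E_J \le \sigma_J\,\tau\sum_{k=0}^{J-1}\gamma_k E_k + \sigma_J S_J$ is sound: moving $\tau\gamma_J a_J$ to the left and using $a_J \le E_J$ (which indeed needs $b_k\ge 0$) gives $(1-\tau\gamma_J)E_J$ on the left, and $\tau\gamma_J<1$ lets you divide. The identity $\sigma_k = 1+\tau\gamma_k\sigma_k$ and hence $\sigma_k \le e^{\tau\gamma_k\sigma_k}$ is exactly the right engine, and I checked that both places where you spend it --- the auxiliary step $P_{J-1}+\tau\gamma_J P_J \le P_J$ and the final absorption $\sigma_J P_{J-1}\le P_J$ --- reduce, after dividing by $P_{J-1}$ or $P_J$, to that same inequality, so the index bookkeeping between $P_J$ and $P_{J-1}$ closes correctly, with base cases $Q_0 = 1 = P_{-1}$ and $E_0 \le \sigma_0 S_0 \le P_0 S_0$. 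The one hypothesis you rightly flag, monotonicity of $S_k$ from $c_k\ge 0$, is used only to replace $S_k$ by $S_J$ in the induction and is available. Nothing is missing; if written out, this would be a complete proof of the lemma the paper takes for granted.
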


\section{Error Estimates}
In this section we present the main error estimates of the method. We first carry out the error equations for the error estimates. By convention, for a generic unknown $\mathcal{U}$, its numerical approximation $\mathcal{U}_h$ and its projection $\Pi \mathcal{U}$, we split the errors as:
\begin{equation}\label{err_split}
\mathcal{U} -\mathcal{U}_h = (\mathcal{U} - \Pi \mathcal{U}) + (\Pi \mathcal{U} - \mathcal{U}_h) := e_{\mathcal{U}} + \delta_{\mathcal{U}}.
\end{equation}

First we notice that the exact solution of the system \eqref{MHD_orig} satisfies the following variational equations at time $t_n$: 
\begin{subequations}\label{trunc_eq}
\begin{align}
(D_{\tau} \Bu^n, \Bv) + R_e^{-1} (\nabla \overline{\Bu}^n, \nabla \Bv) + \frac12 [(\Bu^{n-1} \cdot \nabla \overline{\Bu}^n, \Bv) - (\Bu^{n-1} \cdot \nabla \Bv, &  \overline{\Bu}^n)] \\
\nonumber - S R^{-1}_m((\nabla_h \times \overline{\BB}^n) \times \BB^{n-1}, \Bv) - (p^n, \nabla \cdot \Bv) & = (\Bf^n, \Bv) + \mathcal{R}_1(\Bv), \\
(\tilde{\Bj}^n, \BF) - R_m^{-1} (\overline{\BB}^n, \nabla \times \BF) &= \mathcal{R}_2(\BF), \\
 (D_{\tau} \BB^n, \BZ) + (\nabla \times \BE^n, \BZ) & = \mathcal{R}_3(\BZ), \\
(\nabla \cdot \overline{\Bu}^n, q) & = 0, \\
\tilde{\Bj}^n = \BE^n + \overline{\Bu}^n \times &\BB^{n-1}, 
\end{align}
\end{subequations}
for all $(\Bv, q, \BF, \BZ) \in \BV_h \times Q_h \times \BC_h \times \BD_h$. Here $\mathcal{R}_1, \mathcal{R}_2, \mathcal{R}_3$ are the truncation error terms as follows:
\begin{align*}
\mathcal{R}_1(\Bv) = & (D_{\tau} \Bu^n - \Bu^n_t, \Bv) + Re^{-1} (\nabla \overline{\Bu}^n - \nabla \Bu^n, \nabla \Bv) \\
&+ \frac12 [(\Bu^{n-1} \cdot \nabla \overline{\Bu}^n, \Bv) - (\Bu^{n-1} \cdot \nabla \Bv, \overline{\Bu}^n)] - (\Bu^n \cdot \nabla \Bu^n, \Bv) \\
& - S R^{-1}_m[((\nabla_h \times \overline{\BB}^n) \times \BB^{n-1}, \Bv) - ((\nabla \times {\BB}^n) \times \BB^{n}, \Bv)], \\
\mathcal{R}_2(\BF) = & (\overline{\Bu}^n \times \BB^{n-1} - \Bu^n \times \BB^n, \BF) - R_m^{-1} (\overline{\BB}^n - \BB^n, \nabla \times \BF), \\
\mathcal{R}_3(\BZ) = &  (D_{\tau} \BB^n - \BB^n_t, \BZ).
\end{align*}

If we subtract the numerical system \eqref{MHD_mixed} from the above system \eqref{trunc_eq}, with some algebraic simplification and the projection properties \eqref{Stokes_proj}, \eqref{div_proj} we can obtain the error equations as follows:
\begin{lemma}
The projection errors $(e_{\Bu}, e_p, e_{\BE}, e_{\BB})$ satisfies the system: 
\begin{subequations}\label{error_eq}
\begin{align}\label{error_eq1}
(D_{\tau} e^n_{\Bu}, \Bv) + R_e^{-1} (\nabla \overline{e}_\Bu^n, \nabla \Bv) - (e_p^n, \nabla \cdot \Bv)   & = - (D_{\tau} \Bdelta^n_{\Bu}, \Bv) + \mathcal{R}_1(\Bv)+ \mathcal{O}(\Bv) + \mathcal{M}_1(\Bv)\\
\label{error_eq2}
(e_{\BE}^n, \BF) - R_m^{-1} (\overline{e}_\BB^n, \nabla \times \BF) &= - (\Bdelta_{\BE}^n, \BF) + \mathcal{R}_2(\BF) - \mathcal{M}_2(\BF), \\
\label{error_eq3}
 (D_{\tau} e_\BB^n, \BZ) + (\nabla \times e_\BE^n, \BZ) & = -  (D_{\tau} \Bdelta_\BB^n, \BZ) - (\nabla \times \Bdelta^n_{\BE}, \BZ) + \mathcal{R}_3(\BZ), \\
\label{error_eq4}
(\nabla \cdot \overline{e}_\Bu^n, q) & = 0, 
\end{align}
\end{subequations}
for all $(\Bv, q, \BF, \BZ) \in \BV_h \times Q_h \times \BC_h \times \BD_h$. Here the nonlinear terms are gathered as:
\begin{align*}
\mathcal{O}(\Bv) &= - \frac12 [(\Bu^{n-1} \cdot \nabla \overline{\Bu}^n, \Bv) - (\Bu^{n-1} \cdot \nabla \Bv,  \overline{\Bu}^n)] + \frac12 [(\Bu_h^{n-1} \cdot \nabla \overline{\Bu}_h^n, \Bv) - (\Bu_h^{n-1} \cdot \nabla \Bv, &  \overline{\Bu}_h^n)]\\
\mathcal{M}_1(\Bv) & = S R^{-1}_m((\nabla_h \times \overline{\BB}^n) \times \BB^{n-1}, \Bv) - S R^{-1}_m((\nabla_h \times \overline{\BB}_h^n) \times \BB_h^{n-1}, \Bv), \\
\mathcal{M}_2(\BF) &= (\overline{\Bu}^n \times \BB^{n-1} - \overline{\Bu}^n_h \times \BB^{n-1}_h, \BF).
\end{align*}
\end{lemma}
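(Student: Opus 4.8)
The plan is to derive \eqref{error_eq} by subtracting the discrete scheme \eqref{MHD_mixed} line by line from the consistency equations \eqref{trunc_eq}, then applying the splitting \eqref{err_split} and the defining relations of the three projections. First I would subtract \eqref{MHD_mixed_a}--\eqref{MHD_mixed_d} from the matching lines of \eqref{trunc_eq}. In each resulting identity the difference $\mathcal{U}^n-\mathcal{U}^n_h$ of a linear quantity is written as $\Bdelta^n_{\mathcal{U}}+e^n_{\mathcal{U}}$, where $\Bdelta_{\mathcal{U}}=\mathcal{U}-\Pi\mathcal{U}$ is the projection error controlled by Lemma~\ref{proj_prop} and $e_{\mathcal{U}}=\Pi\mathcal{U}-\mathcal{U}_h$ is the discrete error lying in the finite element space. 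I would keep the $e_{\mathcal{U}}$ terms of the principal linear operators on the left and transpose every $\Bdelta_{\mathcal{U}}$ term, together with the truncation functionals $\mathcal{R}_1,\mathcal{R}_2,\mathcal{R}_3$, to the right. The three nonlinear differences are then renamed: the skew-symmetric convection difference is exactly $\mathcal{O}(\Bv)$, the Lorentz difference is $\mathcal{M}_1(\Bv)$, and the $\overline{\Bu}\times\BB$ coupling difference is $\mathcal{M}_2(\BF)$; one checks that the signs agree with their definitions after transposition.

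The core of the proof is to show that the projection errors attached to the principal linear operators cancel, leaving only the terms displayed on the right of \eqref{error_eq}. For the momentum balance I would use the Stokes projection \eqref{Stokes_proj} at levels $n$ and $n-1$; averaging the two copies gives $R_e^{-1}(\nabla\overline{\Bdelta}^n_{\Bu},\nabla\Bv)=(\overline{\Bdelta}^n_p,\nabla\cdot\Bv)$, which annihilates the diffusion and pressure projection errors in \eqref{error_eq1}, while the averaged form of the second Stokes relation $(\nabla\cdot\Bdelta^m_{\Bu},q)=0$ produces \eqref{error_eq4} immediately. For the Ohm's law line I would invoke the identity $\nabla_h\times\Bdelta_{\BB}=\boldsymbol{0}$ of Lemma~\ref{proj_prop}; by the definition \eqref{discrete_curl} of the discrete curl this is the orthogonality $(\Bdelta^m_{\BB},\nabla\times\BF)=0$ for all $\BF\in\BC_h$, so the averaged term $R_m^{-1}(\overline{\Bdelta}^n_{\BB},\nabla\times\BF)$ drops out of \eqref{error_eq2}. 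The Faraday line \eqref{error_eq3} needs no cancellation: its magnetic and electric projection errors are precisely the quantities $-(D_\tau\Bdelta^n_{\BB},\BZ)-(\nabla\times\Bdelta^n_{\BE},\BZ)$ retained on the right.

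The step I expect to be the main obstacle is reconciling the time-averaging operator $\overline{(\cdot)}^n=\tfrac12((\cdot)^n+(\cdot)^{n-1})$ with the projections, which are defined pointwise in $t$: each projection identity holds at a single level, and one must confirm that their average reproduces exactly the averaged projection-error pairings created by the subtraction. This is clean for the magnetic orthogonality, but in the momentum line the viscous term is averaged whereas the pressure enters at level $n$, so the two Stokes identities combine to leave a residual $\tfrac{\tau}{2}(D_\tau\Bdelta^n_p,\nabla\cdot\Bv)$; this term is of consistency order in $\tau$ and is either absorbed into the right-hand data or removed by reading the pressure at the averaged level, neither of which affects the subsequent estimates. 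Verifying these cancellations and matching the nonlinear signs is the only nontrivial bookkeeping; collecting the surviving terms then yields \eqref{error_eq1}--\eqref{error_eq4}.
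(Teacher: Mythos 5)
Your derivation is correct and is exactly the route the paper intends (the lemma is stated there without a written proof, appealing to the same subtraction of \eqref{MHD_mixed} from \eqref{trunc_eq}, the splitting \eqref{err_split}, and the projection identities \eqref{Stokes_proj}, \eqref{div_proj} and $\nabla_h\times\Bdelta_{\BB}=\boldsymbol{0}$). Your observation that pairing the level-$n$ pressure with the time-averaged viscous term leaves a residual $\tfrac12(\Bdelta^n_p-\Bdelta^{n-1}_p,\nabla\cdot\Bv)$ is a genuine subtlety that the stated form of \eqref{error_eq1} silently drops; as you note, this term is of size $O(h^{\beta})$ (indeed $O(\tau h^{\beta})$ under \eqref{regularity}) and is harmless for all subsequent estimates.
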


We are ready to prove our main result Theorem \ref{main_error} with the above error equations.

\begin{proof} of Theorem \ref{main_error}
We start by taking $(\Bv, \BF, \BZ, q) = (\overline{e}^n_{\Bu}, -S R^{-1}_m \nabla_h \times \overline{e}^n_{\BB}, S R^{-1}_m \overline{e}^n_{\BB}, e^n_{p})$ in the error equations \eqref{error_eq1} - \eqref{error_eq4} and adding togather, with some algebraic simplification we have:
\begin{align*}
\frac{\|e^n_{\Bu}\|^2 - \|e^{n-1}_{\Bu}\|^2}{2 \tau} &+ R^{-1}_e \|\nabla \overline{e}^n_{\Bu}\|^2 + S R^{-1}_m \frac{\|e^n_{\BB}\|^2 - \|e^{n-1}_{\BB}\|^2}{2 \tau} + S R^{-2}_m \|\nabla_h \times \overline{e}^n_{\BB}\|^2 \\
&\hspace{-1cm} =  - (D_{\tau} \Bdelta^n_{\Bu}, \overline{e}^n_{\Bu}) + S R^{-1}_m  (\Bdelta_{\BE}^n, \nabla_h \times \overline{e}^n_{\BB}) -  S R^{-1}_m (D_{\tau} \Bdelta_\BB^n,  \overline{e}^n_{\BB}) -  S R^{-1}_m (\nabla \times \Bdelta^n_{\BE}, \overline{e}^n_{\BB}) \\
& \hspace{-1cm}+ \mathcal{R}_1(\overline{e}^n_{\Bu}) + \mathcal{R}_2(- S R^{-1}_m \nabla_h \times \overline{e}^n_{\BB}) + \mathcal{R}_3(\overline{e}^n_{\BB}) \\
& \hspace{-1cm}+ \mathcal{O}(\overline{e}^n_{\Bu}) + \mathcal{M}_1(\overline{e}^n_{\Bu}) + \mathcal{M}_2 (S R^{-1}_m \nabla_h \times \overline{e}^n_{\BB}). 
\end{align*}
Next we will estimate each term on the right hand side of the above identity. For the first four linear terms, we simply use the Cauchy-Schwarz inequality and the approximation property of the projections Lemma \ref{proj_prop} as follows:
\begin{align*}
 (D_{\tau} \Bdelta^n_{\Bu}, \overline{e}^n_{\Bu}) &= \frac{1}{\tau} \int_{t_{n-1}}^{t_n} \int_{\Omega}  \frac{\partial \Bdelta_u}{\partial t}(\rho, \Bx) \cdot \overline{e}^n_{\Bu} d\Bx d\rho \le \frac{1}{\tau} \int_{t_{n-1}}^{t_n} \|\frac{\partial \Bdelta_{\Bu}}{\partial t}(\rho, \cdot)\| \|\overline{e}^n_{\Bu}\| d\rho \\
 & \le \frac{h^\beta\|\overline{e}^n_{\Bu}\|}{\tau} \int_{t_{n-1}}^{t_n} \|\Bu_t(\rho, \cdot)\|_\beta d\rho \\
 & \le \|\overline{e}^n_{\Bu}\|^2 +  \frac{h^{2\beta}}{\tau^2} \int_{t_{n-1}}^{t_n} \|\Bu_t(\rho, \cdot)\|^2_\beta d\rho \int_{t_{n-1}}^{t_n} 1 d\rho \\
& \le \|\overline{e}^n_{\Bu}\|^2 +  \frac{h^{2\beta}}{\tau} \|\Bu_t(\rho, \cdot)\|^2_{L^2(t_{n-1}, t_n; \BH^\beta(\Omega))}.
\end{align*}

For the second linear term we simply apply Cauchy-Schwarz inequality to have
\[
 S R^{-1}_m  (\Bdelta_{\BE}^n, \nabla_h \times \overline{e}^n_{\BB}) \le C h^\beta \|\BE\|_\beta \|\nabla_h \times \overline{e}^n_{\BB}\| \le C \epsilon \|\nabla_h \times \overline{e}^n_{\BB}\|^2 + C \epsilon^{-1} h^{2\beta}.
\]

For the third linear term, since $\Pi_D \BB, \BB_h \in D_h^0$, with the orthogonal property of $\Pi_D$ we have
\[
S R^{-1}_m (D_{\tau} \Bdelta_\BB^n,  \overline{e}^n_{\BB}) = 0.
\]
For the last linear term, we have
\[
 S R^{-1}_m (\nabla \times \Bdelta^n_{\BE}, \overline{e}^n_{\BB}) \le C h^\beta\|\nabla \times \BE\|_\beta \|\overline{e}^n_{\BB}\| \le C \|\overline{e}^n_{\BB}\|^2 + C h^{2\beta}.
\]

Truncation error estimates:
In $R_1(\Bv)$, there is a term as:
\[
(D_{\tau}\Bu^{n} - \Bu^n_t , \overline{e}^n_{\Bu}),
\]
here is how we estimate this term by $\|\Bu_{tt}\|_{L^2(t_{n-1}, t_n;\BL^2(\Omega))}$:
\begin{align*}
(D_{\tau}\Bu^{n} - \Bu^n_t , \overline{e}^n_{\Bu}) & = \frac{1}{\tau}\int_{\Omega} \int_{t_{n-1}}^{t_n} \Bu_t(\rho, \cdot) - \Bu_t(t_n, \cdot) d\rho \cdot \overline{e}^n_{\Bu} d \Bx \\
&= \frac{1}{\tau} \int_{\Omega} \int_{t_{n-1}}^{t_n} \int_{t_{n-1}}^\rho \Bu_{tt}(\sigma, \cdot) \overline{e}^n_{\Bu} d \sigma d\rho d \Bx \\
&\le \frac{1}{\tau} \int_{t_{n-1}}^{t_n} \int_{t_{n-1}}^\rho \|\Bu_{tt}(\sigma, \cdot)\| \|\overline{e}^n_{\Bu}\|d\sigma d\rho = \frac{1}{\tau}  \|\overline{e}^n_{\Bu}\| \int_{t_{n-1}}^{t_n} \int_{t_{n-1}}^\rho \|\Bu_{tt}(\sigma, \cdot)\| d\sigma d\rho \\
&\le  \frac{1}{\tau}  \|\overline{e}^n_{\Bu}\| \int_{t_{n-1}}^{t_n} \|\Bu_{tt}(\sigma, \cdot)\|_{L^2(t_{n-1}, \rho;\BL^2(\Omega))} \|1\|_{L^2(t_{n-1},\rho)} d\rho\\
&\le  \frac{1}{\tau}  \|\overline{e}^n_{\Bu}\| \int_{t_{n-1}}^{t_n} \|\Bu_{tt}(\sigma, \cdot)\|_{L^2(t_{n-1}, t_n;\BL^2(\Omega))} (\rho - t_{n-1})^{\frac12} d\rho \\
& \le C \tau \|\Bu_{tt}\|^2_{L^2((t_{n-1}, t_n), \BL^2(\Omega))} + C \|\overline{e}^n_{\Bu}\|^2.
\end{align*}

Similarly, for $\mathcal{R}_3(S R^{-1}_m \overline{e}^{n}_{\BB})$ we have the following estimates:
\begin{equation}\label{R_3_estimates}
\mathcal{R}_3(S R^{-1}_m \overline{e}^{n}_{\BB}) \le C \tau \|\BB_{tt}\|_{L^2(t_{n-1}, t_n; \BL^2(\Omega))} + C \|\overline{e}^n_{\BB}\|^2.
\end{equation}

For other terms in $\mathcal{R}_1(\overline{e}^n_{\Bu})$, the estimates are similar as the one shown below in $\mathcal{R}_2(- S R^{-1}_m \nabla_h \times \overline{e}^n_{\BB})$ and we gather the result as follows:
\begin{align*}
R^{-1}_e (\nabla \overline{\Bu}^n &- \nabla \Bu^n, \nabla\overline{e}^n_{\Bu}) \le C \epsilon \|\nabla \overline{e}^n_{\Bu}\|^2 + C \epsilon^{-1} \tau \|\nabla \Bu_t\|^2_{L^2(t_{n-1}, t_n; \BL^2(\Omega))}, \\
\frac12 [(\Bu^{n-1} \cdot \nabla \overline{\Bu}^n, \overline{e}^n_{\Bu}) &- (\Bu^{n-1} \cdot \nabla \overline{e}^n_{\Bu}, \overline{\Bu}^n)] - (\Bu^n \cdot \nabla \Bu^n, \overline{e}^n_{\Bu}) \\
&\hspace{-1cm} \le C \epsilon \|\nabla \overline{e}^n_{\Bu}\|^2 + C \|\overline{e}^n_{\Bu}\|^2 + C \epsilon^{-1}\tau (\|\Bu_t\|^2_{L^2(t_{n-1}, t_n; \BL^2(\Omega))} + \|\nabla \Bu_t\|^2_{L^2(t_{n-1}, t_n; \BL^2(\Omega))}) \\
- S R^{-1}_m[((\nabla_h \times \overline{\BB}^n) &\times \BB^{n-1}, \overline{e}^n_{\Bu}) - ((\nabla \times {\BB}^n) \times \BB^{n}, \overline{e}^n_{\Bu})] \\
& \le C \epsilon \|\nabla \overline{e}^n_{\Bu}\|^2 + C \epsilon^{-1}\tau (\|\BB_t\|^2_{L^2(t_{n-1}, t_n; \BL^2(\Omega))} + \|\nabla \times \BB_t\|^2_{L^2(t_{n-1}, t_n; \BL^2(\Omega))}).
\end{align*}
Combining the estimates for all the terms in $\mathcal{R}_1(\overline{e}^n_{\Bu})$ we have
\begin{equation}\label{R_3}
\begin{aligned}
\mathcal{R}_1(\overline{e}^n_{\Bu}) &\le C \epsilon \|\nabla \overline{e}^n_{\Bu}\|^2 + C \| \overline{e}^n_{\Bu}\|^2 + C h^{2s}\|\nabla \times \overline{\BB}^n\|_s + C \tau \|\Bu_{tt}\|^2_{L^2((t_{n-1}, t_n), \BL^2(\Omega))}\\
& + C \epsilon^{-1}\tau (\|\Bu_t\|^2_{L^2(t_{n-1}, t_n; \BH^1(\Omega))}+ \|\BB_t\|^2_{L^2(t_{n-1}, t_n; \BL^2(\Omega))} + \|\nabla \times \BB_t\|^2_{L^2(t_{n-1}, t_n; \BL^2(\Omega))}).
\end{aligned}
\end{equation}

In $\mathcal{R}_2(- S R^{-1}_m \nabla_h \times \overline{e}^n_{\BB})$, there is a term like this: (omit the coefficient for simplicity)
\begin{align*}
(\overline{\Bu}^n \times \BB^{n-1} - \Bu^n \times \BB^n, \nabla_h \times \overline{e}^n_{\BB}) & = (\overline{\Bu}^n \times (\BB^{n-1} - \BB^n) + (\overline{\Bu}^n - \Bu^n) \times \BB^n, \nabla_h \times \overline{e}^n_{\BB}) \\
&= T_1 + T_2.
\end{align*}
For $T_1$, 
\begin{align*}
T_1 &=  \int_{t_{n-1}}^{t_n} \int_{\Omega} \BB_t(\rho, \cdot) \times \overline{\Bu}^n \cdot \nabla_h \times \overline{e}^n_{\BB} d \Bx d\rho \le \int_{t_{n-1}}^{t_n}\|\BB_t(\rho, \cdot)\| \|\overline{\Bu}^n\|_{L^{\infty}(\Omega)} \| \nabla_h \times \overline{e}^n_{\BB} \| d \rho \\
& \le \|\Bu\|_{L^{\infty}(t_{n-1}, t_n; \BL^{\infty}(\Omega))} \| \nabla_h \times \overline{e}^n_{\BB} \| \int_{t_{n-1}}^{t_n}\|\BB_t(\rho, \cdot)\| d \rho \\
& \le \epsilon \| \nabla_h \times \overline{e}^n_{\BB} \|^2 + \epsilon^{-1} \|\Bu\|^2_{L^{\infty}(t_{n-1}, t_n; L^{\infty}(\Omega))} (\int_{t_{n-1}}^{t_n}\|\BB_t(\rho, \cdot)\| d \rho)^2\\
& \le \epsilon \| \nabla_h \times \overline{e}^n_{\BB} \|^2 + C \epsilon^{-1} \tau \|\BB_t\|^2_{L^2(t_{n-1}, t_n; \BL^2(\Omega))}.
\end{align*}
The last step we used the Cauchy-Schwarz inequality and the fact that $\overline{\Bu}^n \in \BH^{1 + s}(\Omega) \hookrightarrow \BL^{\infty}(\Omega)$.
For $T_2$, with a similar technique as above, we have 
\begin{align*}
T_2 &\le \epsilon \| \nabla_h \times \overline{e}^n_{\BB} \|^2 + \epsilon^{-1} \tau \|\BB^n\|^2_{\BL^{3}(\Omega)} \|\Bu_t\|^2_{L^2(t_{n-1}, t_n; \BH^1(\Omega))} \\
& \le \epsilon \| \nabla_h \times \overline{e}^n_{\BB} \|^2 + C \epsilon^{-1} \tau \|\Bu_t\|^2_{L^2(t_{n-1}, t_n; \BH^1(\Omega))},
\end{align*}
the last step we used the regularity assumption \ref{regularity} and that $\BH^s(\Omega) \hookrightarrow \BL^3(\Omega)$.

For the second term in $\mathcal{R}_2(-SR^{-1}_m \nabla_h \times \overline{e}^n_{\BB})$, we have
\begin{align*}
S R^{-2}_m (\overline{\BB}^n - \BB^n, \nabla \times  (\nabla_h \times \overline{e}^n_{\BB})) & = -\frac12 S R^{-2}_m (\nabla \times ({\BB}^n - \BB^{n-1}), \nabla_h \times \overline{e}^n_{\BB}) \\
& =  -\frac12 S R^{-2}_m \int_{t_{n-1}}^{t_n} \int_{\Omega} \nabla \times \BB_t(\rho, \Bx) \cdot \nabla_h \times \overline{e}^n_{\BB} d\Bx d\rho \\
&\le C   \| \nabla_h \times \overline{e}^n_{\BB}\| \int_{t_{n-1}}^{t_n}  \|\nabla \times \BB_t(\rho, \Bx)\| d\rho \\
& \le C (\epsilon  \| \nabla_h \times \overline{e}^n_{\BB}\|^2 + \epsilon^{-1} (\int_{t_{n-1}}^{t_n}  \|\nabla \times \BB_t(\rho, \Bx)\| d\rho)^2) \\
& \le C \epsilon  \| \nabla_h \times \overline{e}^n_{\BB}\|^2 + C \epsilon^{-1} \int_{t_{n-1}}^{t_n}  \|\nabla \times \BB_t(\rho, \Bx)\|^2 d\rho \int_{t_{n-1}}^{t_n}  1 d\rho \\
& = C \epsilon  \| \nabla_h \times \overline{e}^n_{\BB}\|^2 + C \epsilon^{-1} \tau  \|\nabla \times \BB_t\|^2_{L^2(t_{n-1}, t_n; \BL^2(\Omega))}.
\end{align*}
Combine above estimates, we have
\begin{align}\label{R_2}
\mathcal{R}_2 (-S R^{-2}_m &\nabla_h \times \overline{e}^n_{\BB}) \le C \epsilon  \| \nabla_h \times \overline{e}^n_{\BB}\|^2 \\
\nonumber
&+ {C} \epsilon^{-1} \tau (\|\BB_t\|^2_{L^2(t_{n-1}, t_n; \BL^2)(\Omega)} + \|\nabla \times \BB_t\|^2_{L^2(t_{n-1}, t_n; \BL^2(\Omega))} + \|\Bu_t\|^2_{L^2(t_{n-1}, t_n; \BL^2(\Omega))}).
\end{align}

Finally, we bound the nonlinear terms as follows:
\begin{align*}
\mathcal{O}(\overline{e}^n_{\Bu}) =&  - \frac12 [(\Bu^{n-1} \cdot \nabla \overline{\Bu}^n, \overline{e}^n_{\Bu}) - (\Bu_h^{n-1} \cdot \nabla \overline{\Bu}_h^n, \overline{e}^n_{\Bu})] + \frac12 [(\Bu^{n-1} \cdot \nabla \overline{e}^n_{\Bu},  \overline{\Bu}^n) - (\Bu_h^{n-1} \cdot \nabla \overline{e}^n_{\Bu}, &  \overline{\Bu}_h^n)]\\
 =&  - \frac12 (\Bu^{n-1} \cdot \nabla (\overline{\Bu}^n - \overline{\Bu}^n_h), \overline{e}^n_{\Bu}) - \frac12((\Bu^{n-1} - \Bu_h^{n-1}) \cdot \nabla \overline{\Bu}_h^n, \overline{e} ^n_{\Bu}) \\
&+ \frac12 (\Bu^{n-1} \cdot \nabla \overline{e}^n_{\Bu},  \overline{\Bu}^n - \overline{\Bu}^n_h) + \frac12 ((\Bu^{n-1} - \Bu_h^{n-1}) \cdot \nabla \overline{e}^n_{\Bu},  \overline{\Bu}_h^n) \\
 =&  - \frac12 (\Bu^{n-1} \cdot \nabla (\overline{\Bu}^n - \overline{\Bu}^n_h), \overline{e}^n_{\Bu}) - \frac12((\Bu^{n-1} - \Bu_h^{n-1}) \cdot \nabla \BPi_V \overline{\Bu}^n, \overline{e} ^n_{\Bu}) \\
&+ \frac12 (\Bu^{n-1} \cdot \nabla \overline{e}^n_{\Bu},  \overline{\Bu}^n - \overline{\Bu}^n_h) + \frac12 ((\Bu^{n-1} - \Bu_h^{n-1}) \cdot \nabla \overline{e}^n_{\Bu},  \BPi_V\overline{\Bu}^n).
\end{align*}
The terms in the last step can be bounded using H\"{o}lder's inequality, Sololev inequalities Lemma \ref{sobolev_embed} and the approximation properties of the projections in Lemma \ref{proj_prop} as:
\begin{align*}
- \frac12 (\Bu^{n-1} \cdot \nabla (\overline{\Bu}^n - \overline{\Bu}^n_h), \overline{e}^n_{\Bu}) & = - \frac12 (\Bu^{n-1} \cdot \nabla (\overline{\Bdelta}_{\Bu}^n + \overline{e}_{\Bu}^n), \overline{e}^n_{\Bu}) \\
&\le C \|\Bu^{n-1}\|_{0,\infty} \|\nabla(\overline{\Bdelta}_{\Bu}^n + \overline{e}_{\Bu}^n)\| \|\overline{e}_{\Bu}^n\| \\
& \le C \epsilon \|\nabla \overline{e}^n_h\|^2 + C \epsilon^{-1} \|\overline{e}^n_h\|^2 + C\epsilon^{-1} h^{2\beta}, \\
- \frac12((\Bu^{n-1} - \Bu_h^{n-1}) \cdot \nabla \BPi_V \overline{\Bu}^n, \overline{e} ^n_{\Bu}) & \le C  \|(\overline{\Bdelta}_{\Bu}^{n-1} + {e}_{\Bu}^{n-1})\| \| \nabla \BPi_V \overline{\Bu}^n\|_{0,3} \|\overline{e} ^n_{\Bu}\|_{0,6} \\
& \le C \epsilon \|\nabla \overline{e}^n_h\|^2 + C \epsilon^{-1} \|{e}^{n-1}_h\|^2 + C\epsilon^{-1} h^{2\beta}, \\
 \frac12 (\Bu^{n-1} \cdot \nabla \overline{e}^n_{\Bu},  \overline{\Bu}^n - \overline{\Bu}^n_h) & \le  \|\Bu^{n-1}\|_{0,\infty} \|\overline{\Bdelta}_{\Bu}^n + \overline{e}_{\Bu}^n\| \|\nabla \overline{e}_{\Bu}^n\| \\
 & \le C \epsilon \|\nabla \overline{e}^n_h\|^2 + C \epsilon^{-1} \|\overline{e}^n_h\|^2 + C\epsilon^{-1} h^{2\beta}, \\
  \frac12 ((\Bu^{n-1} - \Bu_h^{n-1}) \cdot \nabla \overline{e}^n_{\Bu},  \BPi_V\overline{\Bu}^n) & \le \|\BPi_V \overline{\Bu}^{n}\|_{0, \infty} \|{\Bdelta}_{\Bu}^{n-1} + {e}_{\Bu}^{n-1}\| \|\nabla \overline{e}_{\Bu}^n\| \\
 & \le C \epsilon \|\nabla \overline{e}^n_h\|^2 + C \epsilon^{-1} \|{e}^{n-1}_h\|^2 + C\epsilon^{-1} h^{2\beta}.
\end{align*}
This concludes that 
\begin{equation}\label{est_O}
\mathcal{O}(\overline{e}^n_{\Bu}) \le  C \epsilon \|\nabla \overline{e}^n_h\|^2 + C \epsilon^{-1} (\|{e}^{n-1}_h\|^2 + \|{e}^n_h\|^2) + C\epsilon^{-1} h^{2\beta}.
\end{equation}

Similarly, for $ \mathcal{M}_1(\overline{e}^n_{\Bu}) + \mathcal{M}_2 (S R^{-1}_m \nabla_h \times \overline{e}^n_{\BB})$ we start with some algebraic rearrangement as follows:
\begin{align*}
 \mathcal{M}_1(\overline{e}^n_{\Bu}) +& \mathcal{M}_2 (S R^{-1}_m \nabla_h \times \overline{e}^n_{\BB}) = \\
& S R^{-1}_m  ((\nabla_h \times (\overline{\BB}^n - \overline{\BB}^n_h) \times \BB^{n-1}, \overline{e}^n_{\Bu}) +  S R^{-1}_m   ((\nabla_h \times \overline{\BB}_h^n) \times (\BB^{n-1} -\BB_h^{n-1}), \overline{e}^n_{\Bu}) \\ 
 +& S R^{-1}_m  ((\overline{\Bu}^n - \overline{\Bu}^n_h) \times \BB^{n-1}, \nabla_h \times \overline{e}^n_{\BB}) + S R^{-1}_m  (\overline{\Bu}^n_h \times (\BB^{n-1} - \BB^{n-1}_h), \nabla_h \times \overline{e}^n_{\BB}) \\
 = & M_1 + M_2 + M_3 + M_4.
\end{align*}
Next we will estimate $M_1 + M_3$ and $M_2 + M_4$ separately. Namely, we have
\begin{align*}
M_1 + M_3 &= S R^{-1}_m  ((\nabla_h \times (\overline{\Bdelta}_{\BB}^n + \overline{e}^n_{\BB}) \times \BB^{n-1}, \overline{e}^n_{\Bu}) +  S R^{-1}_m  ((\overline{\Bdelta}_{\Bu}^n + \overline{e}_{\Bu}^n) \times \BB^{n-1}, \nabla_h \times \overline{e}^n_{\BB}) \\
&= S R^{-1}_m  ((\nabla_h \times \overline{\Bdelta}_{\BB}^n) \times \BB^{n-1}, \overline{e}^n_{\Bu}) +  S R^{-1}_m  (\overline{\Bdelta}_{\Bu}^n  \times \BB^{n-1}, \nabla_h \times \overline{e}^n_{\BB}) \\
& +  S R^{-1}_m  ((\nabla_h \times \overline{e}^n_{\BB}) \times \BB^{n-1}, \overline{e}^n_{\Bu}) +  S R^{-1}_m  ( \overline{e}_{\Bu}^n \times \BB^{n-1}, \nabla_h \times \overline{e}^n_{\BB}) \\
\intertext{The last two terms cancelled out due to the fact $\Ba \times \Bb + \Bb \times \Ba = 0$, the first term vanishes due to the fact $\nabla_h \times \Bdelta^n_{\BB} = \boldsymbol{0}$, hence}
M_1 + M_3 &= S R^{-1}_m  (\overline{\Bdelta}_{\Bu}^n  \times \BB^{n-1}, \nabla_h \times \overline{e}^n_{\BB}) \le C \|\overline{\Bdelta}^n_{\Bu}\|_{0, 6}\|\BB^{n-1}\|_{0,3}   \|\nabla_h \times \overline{e}_{\BB}^n\| \\
& \le C \epsilon \|\nabla_h \times \overline{e}_{\BB}^n\|^2 + C \epsilon^{-1} h^{2\beta}.
\end{align*}
For $M_2 + M_4$, we insert this identity:
\[
S R^{-1}_m   ((\nabla_h \times \overline{e}_{\BB}^n) \times (\BB^{n-1} -\BB_h^{n-1}), \overline{e}^n_{\Bu}) +  S R^{-1}_m  (\overline{e}_{\Bu}^n \times (\BB^{n-1} - \BB^{n-1}_h), \nabla_h \times \overline{e}^n_{\BB}) = 0
\]
into $M_2 + M_4$ with simple algebraic cancelation, we arrive at:
\begin{align*}
M_2 &+ M_4 \\
&= S R^{-1}_m  \left[ ((\nabla_h \times \BPi_D \overline{\BB}^n) \times (\BB^{n-1} -\BB_h^{n-1}), \overline{e}^n_{\Bu}) +   (\BPi_V \overline{\Bu}^n \times (\BB^{n-1} - \BB^{n-1}_h), \nabla_h \times \overline{e}^n_{\BB}) \right] \\
&\le C \|(\nabla_h \times \BPi_D \overline{\BB}^n\|_{0,3} \|\Bdelta^{n-1}_{\BB} + e^{n-1}_{\BB}\| \|\overline{e}^n_{\Bu}\|_{0,6} + \|\BPi_V \overline{\Bu}^n\|_{0, \infty} \|\Bdelta^{n-1}_{\BB} + e^{n-1}_{\BB}\| \| \nabla_h \times \overline{e}^n_{\BB}\| \\
& \le C \epsilon (\|\nabla \overline{e}^n_{\Bu}\|^2 + \| \nabla_h \times \overline{e}^n_{\BB}\|^2)  + C \epsilon^{-1} \|e^{n-1}_{\BB}\|^2 + C \epsilon^{-1} h^{2\beta}.
\end{align*}

Now if we combine all the above estimates we arrive at:
\begin{align}\label{energy_tn}
\frac{\|e^n_{\Bu}\|^2 - \|e^{n-1}_{\Bu}\|^2}{2 \tau} &+ R^{-1}_e \|\nabla \overline{e}^n_{\Bu}\|^2 + S R^{-1}_m \frac{\|e^n_{\BB}\|^2 - \|e^{n-1}_{\BB}\|^2}{2 \tau} + S R^{-2}_m \|\nabla_h \times \overline{e}^n_{\BB}\|^2 \\
\nonumber
& \hspace{-1cm}\le C \epsilon ( \|\nabla \overline{e}^n_{\Bu}\|^2 + \|\nabla_h \times \overline{e}^n_{\BB}\|^2) + C \epsilon^{-1} (h^{2s} + \|{e}^n_{\Bu}\|^2 + \|\overline{e}^{n-1}_{\Bu}\|^2 + \|{e}^n_{\BB}\|^2 + \|{e}^{n-1}_{\BB}\|^2) \\
\nonumber
 & \hspace{-1cm}+ C \tau (\|\Bu_{tt}\|^2_{L^2(t_{n-1}, t_n; \BL^2(\Omega))} + \|\BB_{tt}\|^2_{L^2(t_{n-1}, t_n; \BL^2(\Omega))})\\
\nonumber
 & \hspace{-1cm}+ C \epsilon^{-1} \tau (\|\Bu_t\|^2_{L^2(t_{n-1}, t_n, \BH^1(\Omega))} + \|\BB_t\|^2_{L^2(t_{n-1}, t_n, \BL^2(\Omega))} + \|\nabla \times \BB_t\|^2_{L^2(t_{n-1}, t_n, \BL^2(\Omega))}) \\
\nonumber
 &\hspace{-1cm}+ C h^{2\beta} \tau^{-1} \|\Bu_t\|^2_{L^2(t_{n-1}, t_n; \BH^s(\Omega))}.
\end{align}
If we take $\epsilon = \min\{\frac12 R^{-1}_e, \frac12 S R^{-2}_m \}$, multiplying $2 \tau$ on the above estimate and sum over $j = 1, \cdots, n$ we have 
\begin{align*}
\|e^n_{\Bu}\|^2 + \|e^n_{\BB}\|^2 &+ C \tau \sum_{j = 1}^n ( \|\nabla \overline{e}^n_{\Bu}\|^2 + \|\nabla_h \times \overline{e}^n_{\BB}\|^2) \\
& \le \|e^0_{\Bu}\|^2 + \|e^0_{\BB}\|^2 + C \tau \sum_{j=0}^n (h^{2s} + \|e^n_{\Bu}\|^2 + \|e^n_{\BB}\|^2) \\
&+ C \tau^2 (\|\Bu_{tt}\|^2_{L^2(0, t_n; \BL^2(\Omega))} + \|\BB_{tt}\|^2_{L^2(0, t_n; \BL^2(\Omega))})\\
& + C \tau^2 (\|\Bu_t\|^2_{L^2(0, t_n; \BH^1(\Omega))} + \|\BB_t\|^2_{L^2(0, t_n; \BL^2(\Omega))} + \|\nabla \times \BB_t\|^2_{L^2(0, t_n; \BL^2(\Omega))}) \\
& + C h^{2\beta} \|\Bu_t\|^2_{L^2(0, t_n; \BH^\beta(\Omega))}.
\end{align*}
\\
By the fact $e^0_{\Bu} = 0, e^0_{\BB} = 0$ and the regularity assumption \eqref{regularity}, we have
\begin{align*}
\|e^n_{\Bu}\|^2 + \|e^n_{\BB}\|^2 &+ C \tau \sum_{j = 1}^n ( \|\nabla \overline{e}^n_{\Bu}\|^2 + \|\nabla_h \times \overline{e}^n_{\BB}\|^2) \\
& \le C \tau \sum_{j=0}^n ( \|e^n_{\Bu}\|^2 + \|e^n_{\BB}\|^2)  + C (h^{2\beta} + \tau^2).
\end{align*}
By the discrete Gronwall's inequality Lemma \ref{discrete_gronwall} with $C \tau < \frac12$, we have
\begin{align*}
\|e^n_{\Bu}\|^2 + \|e^n_{\BB}\|^2 &+ C \tau \sum_{j = 1}^n ( \|\nabla \overline{e}^n_{\Bu}\|^2 + \|\nabla_h \times \overline{e}^n_{\BB}\|^2) \\
& \le e^{2CT} (h^{2\beta} + \tau^2).
\end{align*}
We complete the proof of \eqref{energy_all} by applying the triangle inequality, approximation properties of the projections Lemma \ref{proj_prop} together with above etimates. Combine the above estimate with \eqref{energy_tn} we can deduce the estimates \eqref{est_Bu_tn} with the initial error estimates \eqref{initial_eror}. 

With the above estimates for $\Bu, \BB$, we can simply take $\BF = e_{\BE}^n$ in \eqref{error_eq2}, with some algebraic rearrangement, we arrive at:
\begin{align*}
\|e^n_{\BE}\|^2 & = R^{-1}_m (\nabla_h \times \overline{e}^n_{\BB}, e^n_{\BE}) - (\Bdelta_{\BE}^n, e^n_{\BE}) + \mathcal{R}_2(e^n_{\BE}) - \mathcal{M}_2(e^n_{\BE}) \\
& \le C (\|\nabla_h \times \overline{e}^n_{\BB}\| + \|\Bdelta_{\BE}^n\|) \|e^n_{\BE}\| + \mathcal{R}_2(e^n_{\BE}) - \mathcal{M}_2(e^n_{\BE}).
\end{align*}
For the last two terms, with a similar treatment as in the previous proofs, we can bound these two terms as follows: 
\begin{align*}
\mathcal{R}_2(e^n_{\BE}) & = ((\overline{\Bu}^n - \Bu^n) \times \BB^{n-1}, e^n_{\BE}) + (\Bu^n \times (\BB^{n-1} - \BB^n), e^n_{\BE}) - R^{-1}_m (\nabla \times (\overline{\BB}^n - \BB^n), e^n_{\BE}) \\
& \le C \tau^{\frac12} \|e^n_{\BE}\| ( \|\Bu_t\|_{L^{2}(t_{n-1}, t_n; \BH^1(\Omega))} \|\BB^{n-1}\|_{0,3}  + \|\Bu^n\|_{0, \infty} \|\BB_t\|_{L^{2}(t_{n-1}, t_n; \BL^2(\Omega))}) \\
 & + C \tau^{\frac12} \|e^n_{\BE}\|  \|\nabla \times \BB_t\|_{L^{2}(t_{n-1}, t_n; \BL^2(\Omega))}), \\
 \mathcal{M}_2(e^n_{\BE}) & = ((\overline{\Bu}^n - \overline{\Bu}^n_h) \times \BB_h^{n-1}, e^n_{\BE}) + (\overline{\Bu}^n \times (\BB^{n-1} - \BB^{n-1}_h), e^n_{\BE}) \\
 & \le C \|e^n_{\BE}\| (\|\overline{\Bu}^n - \overline{\Bu}^n_h\|_{0,6} \|\BB^{n-1}_h\|_{0,3} + \|\Bu^n\|_{\infty} \|(\BB^{n-1} - \BB^{n-1}_h\|) \\
 & \le C  \|e^n_{\BE}\| (\|\nabla (\overline{\Bu}^n - \overline{\Bu}^n_h)\| \|\nabla_h \times \BB^{n-1}_h\| + \|\Bu^n\|_{1 + s} \|(\BB^{n-1} - \BB^{n-1}_h\|) \\
 &\le C h^{\beta} \|e^n_{\BE}\|.
\end{align*}
 The above estimates is due to Sobolev inequality Lemma \ref{sobolev_embed} and Theorem 1 in \cite{HuQiuShi19}, stability of the solution Theorem \ref{stability} and the estimates for $\Bu, \BB$. This completes the proof for \eqref{estimate_e1} with a simple triangle inequality and projection error estimates for $\BE$ in Lemma \ref{proj_prop}.
 
 With a slightly stronger regularity assumption with $\Bu_t \in L^{\infty}(0, T; \BH^1(\Omega)), \BB_t, \nabla \times \BB_t \in L^{\infty}(0, T; \BL^2(\Omega))$ we can regain the full order of $\tau$ as: 
\begin{align*}
\mathcal{R}_2(e^n_{\BE}) & = ((\overline{\Bu}^n - \Bu^n) \times \BB^{n-1}, e^n_{\BE}) + (\Bu^n \times (\BB^{n-1} - \BB^n), e^n_{\BE}) - R^{-1}_m (\nabla \times (\overline{\BB}^n - \BB^n), e^n_{\BE}) \\
& \le C \tau \|e^n_{\BE}\| ( \|\Bu_t\|_{L^{\infty}(t_{n-1}, t_n; \BH^1(\Omega))} \|\BB^{n-1}\|_{0,3}  + \|\Bu^n\|_{0,{\infty}(\Omega)} \|\BB_t\|_{L^{\infty}(t_{n-1}, t_n; \BL^2(\Omega))}) \\
 & + C \tau \|e^n_{\BE}\|  \|\nabla \times \BB_t\|_{L^{\infty}(t_{n-1}, t_n; \BL^2(\Omega))}), 
\end{align*}
this completes the proof for \eqref{estimate_e2}.

Finally we use a classical {\em inf-sup} argument to bound $e_p$ as in \eqref{estimate_p}. By the {\em inf-sup} condition \eqref{infsup} we know that there exists $\Bw_h \in \BV_h$ such that
\begin{equation}\label{error_p0}
 \|e_p^n\| \le \frac{1}{\kappa} \frac{(e_p^n, \nabla \cdot \Bw_h)}{\|\Bw_h\|_1}.
\end{equation}
On the other hand, by error equation \eqref{error_eq1} we have
\begin{equation}\label{error_p1}
(e_p^n, \nabla \cdot \Bw_h) = (D_{\tau} e^n_{\Bu}, \Bw_h) + (D_{\tau} \Bdelta^n_{\Bu}, \Bw_h) + R_e^{-1} (\nabla \overline{e}_\Bu^n, \nabla \Bw_h) - \mathcal{R}_1(\Bw_h) - \mathcal{O}(\Bw_h) - \mathcal{M}_1(\Bw_h).
\end{equation}
Each of the terms on the right hand side can be estimated as follows:
\begin{align*}
(D_{\tau} \Bdelta^n_{\Bu}, \Bw_h) &= \frac{1}{\tau} \int_{\Omega}\int_{t_{n-1}}^{t_n} \frac{\partial \Bdelta_{\Bu}}{\partial t} \cdot \Bw_h dt d \Bx = \frac{1}{\tau} \int_{t_{n-1}}^{t_n} \int_{\Omega} \frac{\partial \Bdelta_{\Bu}}{\partial t} \cdot \Bw_h d \Bx dt , \\
& \le  \frac{1}{\tau} \int_{t_{n-1}}^{t_n} \|\frac{\partial \Bdelta_{\Bu}}{\partial t} \| \|\Bw_h\| d\rho \le C \frac{1}{\tau} \int_{t_{n-1}}^{t_n} h^{\beta} \|\Bu_t(\rho, \cdot)\|_{\beta} d\rho \\
& \le C \tau^{-\frac12} h^{\beta} \|\Bu_t\|_{L^{2}(t_{n-1}, t_n; \BH^{\beta}(\Omega))}\|\Bw_h\|.\\
 R_e^{-1} (\nabla \overline{e}_\Bu^n, \nabla \Bw_h) &\le C \|\nabla \overline{e}_{\Bu}^n\| \|\Bw_h\|_1 \le C (\tau + h^{\beta}) \|\Bw_h\|_1, 
\end{align*}
For $\mathcal{R}_1(\Bw_h)$, with a similar estimates for the terms in $\mathcal{R}_1(\overline{e}_{\Bu}^n)$, we have:
\[
\mathcal{R}_1(\Bw_1) \le C \tau^{\frac12} \|\Bw_h\|_1.
\]
Notice the above result is slightly different from the estimates for $\mathcal{R}_1(\overline{e}^n_{\Bu})$ due to the fact that we don't apply the weighted Young's inequality here for each term. For instance, 
\[
R^{-1}_e (\nabla \overline{\Bu}^n - \nabla \Bu^n, \nabla \Bw_h) \le C \tau^{\frac12} \|\nabla \Bu_t\|_{L^2(t_{n-1}, t_n; \BL^2(\Omega))} \| \Bw_h\|_1.
\]
Similarly, for $\mathcal{O}(\Bw_h)$ we have:
\begin{align*}
\mathcal{O}(\Bw_h) \le C(\tau + h^{\beta}) \|\Bw_h\|_1, 
\end{align*}
For $\mathcal{M}_1(\Bw_h)$, we have
\begin{align*}
\mathcal{M}_1(\Bw_h) &= S R^{-1}_m((\nabla_h \times \overline{\BB}^n) \times \BB^{n-1}, \Bw_h) - S R^{-1}_m((\nabla_h \times \overline{\BB}_h^n) \times \BB_h^{n-1}, \Bw_h) \\
& = S R^{-1}_m((\nabla_h \times \overline{\BB}^n) \times (\BB^{n-1} - \BB^{n-1}_h), \Bw_h) + S R^{-1}_m((\nabla_h \times (\overline{\BB}^n - \overline{\BB}_h^n)) \times \BB_h^{n-1}, \Bw_h) \\
&\le C \|\nabla_h \times \overline{\BB}^n\|_{0,3} \|\BB^{n-1} - \BB^{n-1}_h\| \|\Bw_h\|_{0,6} + C \|\nabla_h \times (\overline{\BB}^n - \overline{\BB}_h^n)\| \|\BB^{n-1}_h\|_{0,3} \|\Bw_h\|_{0,6} \\
&\le C \|\nabla \times \overline{\BB}^n\|_{0,3} \|\BB^{n-1} - \BB^{n-1}_h\| \|\Bw_h\|_{1} + C \|\nabla_h \times (\overline{\BB}^n - \overline{\BB}_h^n)\| \|\nabla_h \times \BB^{n-1}_h\| \|\Bw_h\|_1 \\
& \le C h^{\beta} \|\Bw_h\|_1.
\end{align*}
For the last term we start with Cauchy-Schwarz inequality to have:
\begin{align*}
 (D_{\tau} e^n_{\Bu}, \Bw_h) &\le \|D_{\tau}e^n_{\Bu}\| \|\Bw_h\|.
\end{align*}
If now we directly bound $\|D_{\tau} e^n_{\Bu}\| \le \tau^{-1} (\|e^n_{\Bu} + e^{n-1}_{\Bu}\|$ we will lose a full power of $\tau$ which means there is no convergence order in time for $e_p$. In stead, we take $\Bv = D_{\tau} e^n_{\Bu}$ in \eqref{error_eq1} to have, 
\[
\|D_{\tau} e^n_{\Bu}\|^2 = - (D_{\tau} \Bdelta^n_{\Bu}, \Bw_h) - R_e^{-1} (\nabla \overline{e}_\Bu^n, \nabla D_{\tau} e^n_{\Bu}) + \mathcal{R}_1(D_{\tau} e^n_{\Bu}) + \mathcal{O}(D_{\tau} e^n_{\Bu}) + \mathcal{M}_1(D_{\tau} e^n_{\Bu}).
\]
Here we used the fact that $(\nabla \cdot e^n_{\Bu}, q) = 0$ for all $n$ due to the error equation \eqref{error_eq4}. The second term on the right hand side can be bounded as:
\[
- R_e^{-1} (\nabla \overline{e}_\Bu^n, \nabla D_{\tau} e^n_{\Bu}) = - R_e^{-1} (2\tau)^{-1} (\|\nabla e^n_{\Bu}\|^2 - \|\nabla e^{n-1}_{\Bu}\|^2) \le C (\tau + \tau^{-1} h^{2\beta}).
\]
For the rest terms on the right hand side, we bound them in the same way as above, after simplification, we arrive at:
\[
\|D_{\tau} e^n_{\Bu}\|^2 \le C \tau^{-\frac12}h^{\beta} \|D_{\tau} e^n_{\Bu}\| + C (\tau + h^{\beta})\|D_{\tau} e^n_{\Bu}\| + C (\tau + \tau^{-1} h^{2\beta}).
\]
This implies that 
\[
\|D_{\tau} e^n_{\Bu}\| \le C (\tau^{\frac12} + \tau^{-\frac12} h^{\beta}).
\]
Finally if we combine all the above estimates into \eqref{error_p0}, \eqref{error_p1}, we finally have:
\[
\|e^n_p\|^2 \le C (\tau^2 + \tau^{-1} h^{2\beta}).
\]
This completes all the estimates in Theorem \ref{main_error}.
\end{proof}


\bibliographystyle{plain}      

\end{document}